\documentclass{eptcs}

\usepackage{amsmath,amssymb,amsthm}
\usepackage{enumerate,stmaryrd,mathpartir}
\usepackage{graphicx,graphbox,hyperref}
\usepackage{tikz-cd}

\theoremstyle{definition}
\newtheorem{definition}{Definition}
\newtheorem{theorem}{Theorem}

\newtheorem{observation}{Observation}

\newtheorem{lemma}{Lemma}
\newtheorem{remark}{Remark}

\newcommand{\A}{\mathbb{A}}

\newcommand{\D}{\mathbb{D}}

\newcommand{\bh}{\mathbf{H}}
\newcommand{\bv}{\mathbf{V}}

\newcommand{\corner}[1]{{\text{}^\ulcorner_\llcorner\!{#1}\!_\lrcorner^\urcorner}}

\newcommand{\cells}[4]{({\scriptstyle #1} {{\scriptstyle #2} \atop {\scriptstyle #3}} {\scriptstyle #4})}
\newcommand{\floatcells}[4]{\left({\scriptstyle #1} {{\scriptstyle #2} \atop {\scriptstyle #3}} {\scriptstyle #4}\right)}

\newcommand{\alice}{\texttt{Alice}}
\newcommand{\bob}{\texttt{Bob}}
 
\makeatletter
\providecommand{\leftsquigarrow}{%
  \mathrel{\mathpalette\reflect@squig\relax}%
}
\newcommand{\reflect@squig}[2]{%
  \reflectbox{$\m@th#1\rightsquigarrow$}%
}
\makeatother

\title{Cornering Optics}
\author{
Guillaume Boisseau
\institute{University of Oxford}
\thanks{This research is supported by the EPSRC.}
\and
Chad Nester
\institute{Tallinn University of Technology}
\thanks{This research was supported by the ESF funded Estonian IT Academy research measure (project 2014-2020.4.05.19-0001).}
\and
Mario Rom\'{a}n
\institute{Tallinn University of Technology \textsuperscript{\footnotesize{$\dagger$}}}
}

\begin{document}

\maketitle

\begin{abstract}
We show that the category of optics in a monoidal category arises naturally from the free cornering of that category. Further, we show that the free cornering of a monoidal category is a natural setting in which to work with comb diagrams over that category. The free cornering admits an intuitive graphical calculus, which in light of our work may be used to reason about optics and comb diagrams. 
\end{abstract}


\section*{Introduction}

Optics in a monoidal category are a notion of bidirectional transformation, and have been something of a hot topic in recent years. In particular \emph{lenses}, which are optics in a cartesian monoidal category, play an important role in the theory of open games~\cite{Ghani2018}, compositional machine learning~\cite{Fong2019Backprop}, dialectica categories~\cite{Paiva1989}, functional programming~\cite{Pickering2017, Clarke2020}, the theory of polynomial functors ~\cite{Spivak2022}, and of course in the study of bidirectional transformations~\cite{Oles1982,Foster2005}. 

We recall the elementary presentation of the category $\mathsf{Optic}_\A$ of optics in a monoidal category $\A$. Objects $(A,B)$ are pairs of objects of $\A$. Arrows $\langle \alpha \mid \beta \rangle_M : (A,B) \to (C,D)$ consist of arrows $\alpha : A \to M \otimes C$ and $\beta : M \otimes D \to B$ of $\A$. It is helpful to visualize this as follows:
\[
\includegraphics[height=1.3cm,align=c]{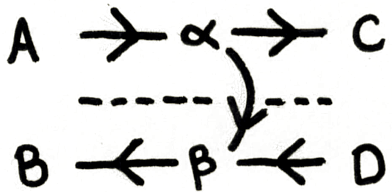}
\]
Arrows are subject to equations of the form $\langle \alpha(f \otimes 1_C) \mid \beta \rangle_N = \langle \alpha \mid (f \otimes 1_D)\beta \rangle_M$ for $f : M \to N$ in $\A$. This is often visualized as a sort of sliding between components, as in:
\[
\includegraphics[height=1.7cm,align=c]{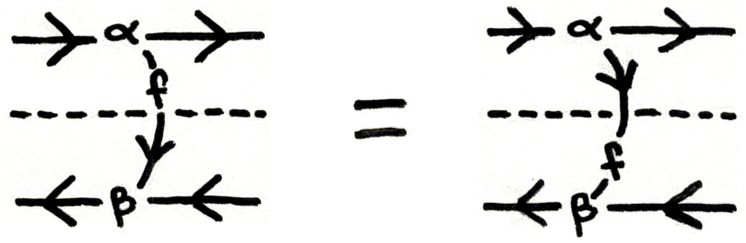}
\]
Equivalently, the hom-sets of $\mathsf{Optic}_\A$ can be given as a coend of hom-functors of $\A$:
\[
\mathsf{Optic}_\A((A,B),(C,D))
\cong
\int^M \A(A,M \otimes C) \times \A(M \otimes D,B)
\]
Composition is given by $\langle \alpha \mid \beta \rangle_M\langle \gamma \mid \delta \rangle_N = \langle \alpha(1_M \otimes \gamma) \mid (1_M \otimes \delta) \beta \rangle_{M \otimes N}$. Visually:
\[
\includegraphics[height=1.5cm,align=c]{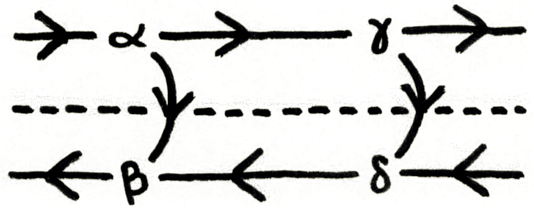}
\]
Identity arrows are given by $1_{(A,B)} = \langle 1_A \mid 1_B \rangle_I$.

Originally studied as an approach to concurrency by Nester~\cite{Nes21}, the \emph{free cornering} of a monoidal category is the double category obtained by freely adding companion and conjoint structure to it. The usual string diagrams for monoidal categories extend to an intuitive graphical calculus for the free cornering. The free cornering is the main piece of mathematical machinery in our development, and we give a detailed introduction to it in Section~\ref{sec:singledouble}.

Our main contribution is a characterisation of optics in a monoidal category in terms of its free cornering. More exactly, in Theorem~\ref{thm:fullsubcat} we show that the category of optics is a full subcategory of the horizontal cells of the free cornering. In addition to shedding some light on the nature of optics, this allows us to reason about them using the graphical calculus of the free cornering. We demonstrate this by using the graphical calculus to prove Lemmas~\ref{lem:inverselawful},~\ref{lem:lensdecompose},~\ref{lem:lawsimplylawful}, and~\ref{lem:lawfulimplieslaws}, which are a series of results originally due to Riley~\cite{Riley2018} concerning the lens laws. This occupies Section~\ref{sec:mainresults}.

Optics in a monoidal category can be seen as a special case of \emph{comb diagrams} in that category. Comb diagrams arose in the theory of quantum circuits~\cite{Chiribella2008}, and have since appeared in algebraic investigations of causal structure~\cite{Kissinger2017,Jacobs2021}. We suspect comb diagrams to be widely applicable, but there is not yet a commonly accepted algebra of comb diagrams. In Section~\ref{sec:combs} we give a notion of (single-sided) comb diagram in terms of the free cornering that coincides with the notion of comb diagram present in the work of Rom\'{a}n~\cite{Roman2021}. We demonstrate that the free cornering is a natural setting in which to work with comb diagrams, and consider this a further contribution of the present work. 

Our results are consequences of Lemma~\ref{lem:coendcorrespondence}, which characterises cells of the free cornering with a certain boundary shape in terms of coends. In particular, we make use of the soundness result for the graphical calculus of the free cornering due to Myers~\cite{Mye16}. The relevant definitions and the lemma itself are presented in Section~\ref{sec:alternationlemma}. The reader need not be familiar with coends to follow our development. While coends connect the free cornering to the wider literature through Lemma~\ref{lem:coendcorrespondence}, our work offers an alternate perspective that is conceptually simpler. 

In summary, we give a novel characterisation of optics and comb diagrams in a monoidal category in terms of the free cornering of that category. The graphical calculus of the free cornering allows one to work with these structures more easily. In addition to telling us something about the nature of optics and comb diagrams, our results suggest that the free cornering is worthy of further study in its own right. 

\section{Double Categories and the Free Cornering}\label{sec:singledouble}

In this section we set up the rest of our development by presenting the theory of single object double categories and the free cornering of a monoidal category. In this paper we consider only \emph{strict} monoidal categories, and in our development the term ``monoidal category'' should be read as ``strict monoidal category''. That said, we imagine that our results will hold in some form for arbitrary monoidal categories via the coherence theorem for monoidal categories~\cite{Mac71}. 

A \emph{single object double category} is a double category $\D$ with exactly one object. In this case $\D$ consists of a \emph{horizontal edge monoid} $\D_H = (\D_H, \otimes, I)$, a \emph{vertical edge monoid} $\D_V = (\D_V, \otimes, I)$, and a collection of \emph{cells}
\[
\includegraphics[height=1.7cm,align=c]{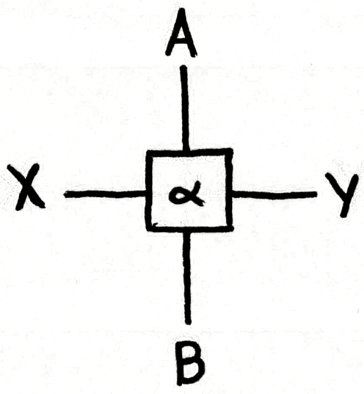}
\]
where $A,B \in \D_H$ and $X,Y \in \D_V$. We write $\D\cells{X}{A}{B}{Y}$ for the \emph{cell-set} of all such cells in $\D$. Given cells $\alpha,\beta$  where the right boundary of $\alpha$ matches the left boundary of $\beta$ we may form a cell $\alpha \vert \beta$ -- their \emph{horizontal composite} -- and similarly if the bottom boundary of $\alpha$ matches the top boundary of $\beta$ we may form $\frac{\alpha}{\beta}$ -- their \emph{vertical composite} -- with the boundaries of the composite cell formed from those of the component cells using $\otimes$. We depict horizontal and vertical composition, respectively, as in:
\begin{mathpar}
\includegraphics[height=1.7cm,align=c]{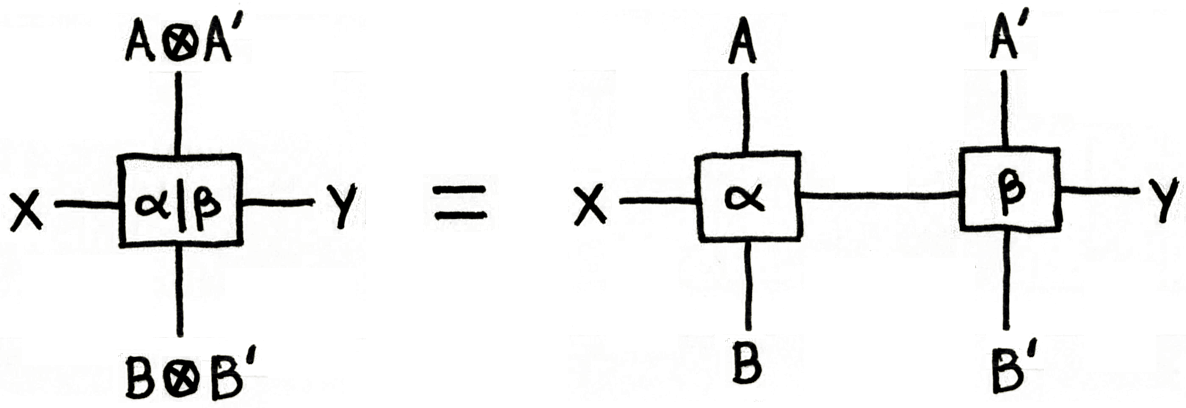}

\text{and}

\includegraphics[height=2.3cm,align=c]{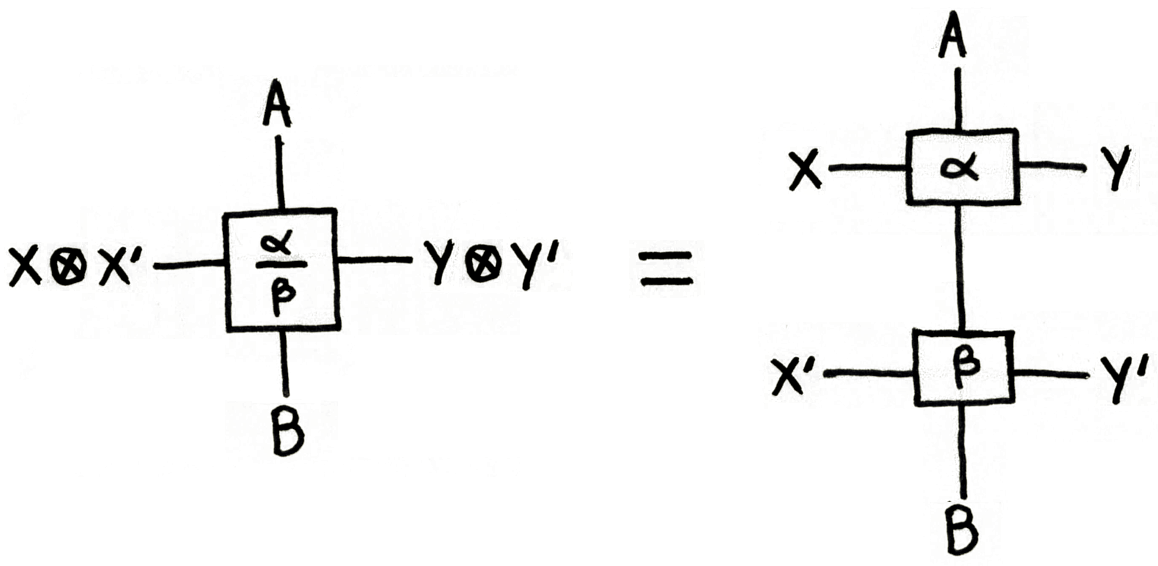}
\end{mathpar}
Horizontal and vertical composition of cells are required to be associative and unital. We omit wires of sort $I$ in our depictions of cells, allowing us to draw horizontal and vertical identity cells, respectively, as in:
\begin{mathpar}
\includegraphics[height=1.7cm,align=c]{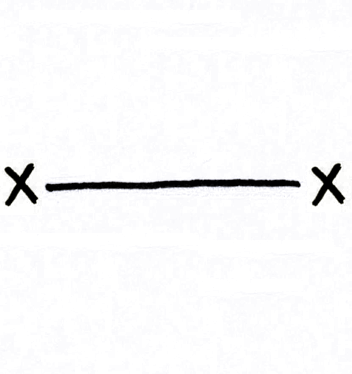}

\text{and}

\includegraphics[height=1.7cm,align=c]{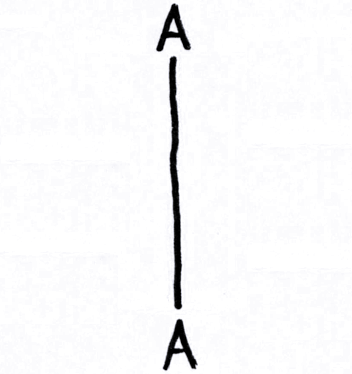}
\end{mathpar}
Finally, the horizontal and vertical identity cells of type $I$ must coincide -- we write this cell as $\square_I$ and depict it as empty space, see below on the left -- and vertical and horizontal composition must satisfy the interchange law. That is, $\frac{\alpha}{\beta}\vert \frac{\gamma}{\delta} = \frac{\alpha \vert \gamma}{\beta \vert \delta}$, allowing us to unambiguously interpret the diagram below on the right:
\begin{mathpar}
\includegraphics[height=1.7cm,align=c]{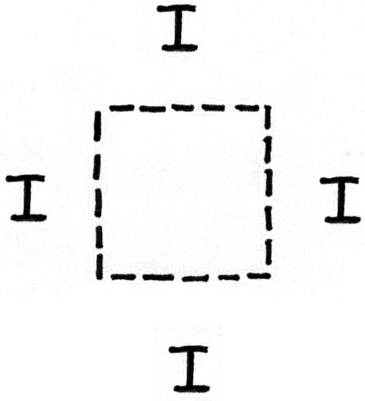}

\phantom{\text{and}}

\includegraphics[height=2.3cm,align=c]{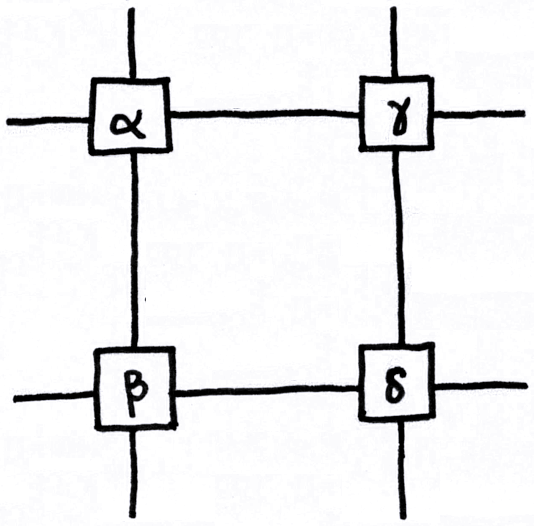}
\end{mathpar}

Every single object double category $\D$ defines strict monoidal categories $\bv \D$ and $\bh \D$, consisting of the cells for which the $\D_H$ and $\D_V$ valued boundaries respectively are all $I$, as in:
\begin{mathpar}
\includegraphics[height=1.7cm,align=c]{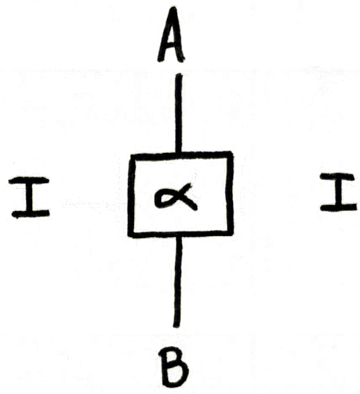}

\text{and}

\includegraphics[height=1.7cm,align=c]{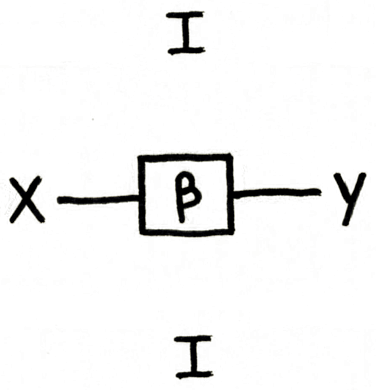}
\end{mathpar}
That is, the collection of objects of $\bv \D$ is $\D_H$, composition in $\bv \D$ is vertical composition of cells, and the tensor product in $\bv \D$ is given by horizontal composition:
\[
\includegraphics[height=1.7cm]{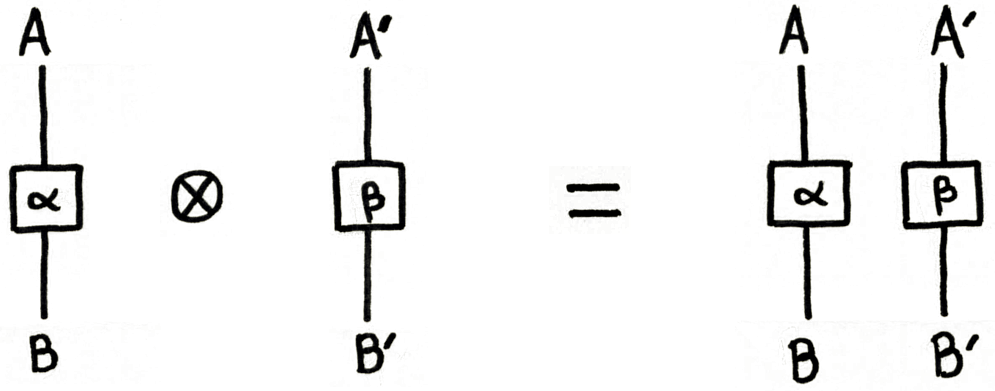}
\]
In this way, $\bv \D$ forms a strict monoidal category, which we call the category of $\emph{vertical cells}$ of $\D$. Similarly, $\bh \D$ is also a strict monoidal category (with collection of objects $\D_V$) which we call the \emph{horizontal cells} of $\D$.

Next, we introduce the free cornering of a monoidal category.

\begin{definition}[\cite{Nes21}]
  Let $\A$ be a monoidal category. We define the \emph{free cornering} of $\A$, written $\corner{\A}$, to be the free single object double category on the following data:
  \begin{itemize}
  \item The horizontal edge monoid $\corner{\A}_H = (\A_0, \otimes, I)$ is given by the objects of $\A$.
  \item The vertical edge monoid $\corner{\A}_V = (\A_0 \times \{ \circ,\bullet \})^*$ is the free monoid on the set $\A_0 \times \{ \circ, \bullet \}$ of polarized objects of $\A$ -- whose elements we write $A^\circ$ and $A^\bullet$.
  \item The generating cells consist of vertical cells $\corner{f}$ for each morphism $f : A \to B$ of $\A$ subject to equations as in:
  \[
  \includegraphics[height=1.7cm,align=c]{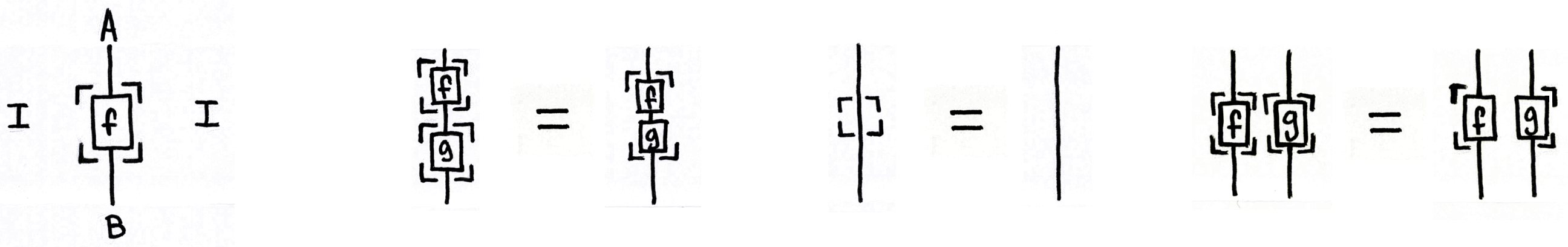}
  \]
  along with the following \emph{corner cells} for each object $A$ of $\A$:
  \[
  \includegraphics[height=1.7cm,align=c]{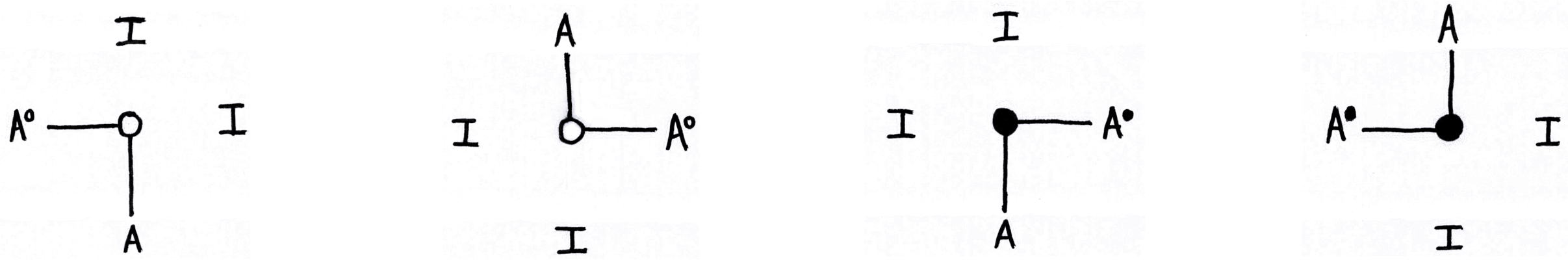}
  \]
  which are subject to the \emph{yanking equations}:
  \[
  \includegraphics[height=1cm,align=c]{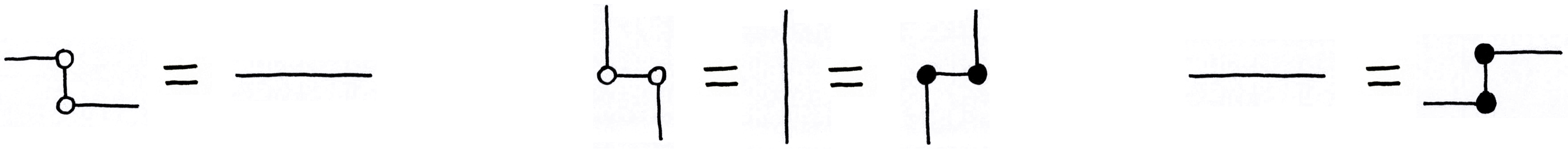}
  \]
  \end{itemize}
\end{definition}
For a precise development of free double categories see \cite{Fio08}. Briefly, cells are formed from the generating cells by horizontal and vertical composition, subject to the axioms of a double category in addition to any generating equations. The corner structure has been heavily studied under various names including \emph{proarrow equipment}, \emph{connection structure}, and \emph{companion and conjoint structure}. A good resource is the appendix of \cite{Shu08}.
 
We understand elements of $\corner{\A}_V$ as \emph{$\A$-valued exchanges}. Each exchange $X_1 \otimes \cdots \otimes X_n$ involves a left participant and a right participant giving each other resources in sequence, with $A^\circ$ indicating that the left participant should give the right participant an instance of $A$, and $A^\bullet$ indicating the opposite. For example say the left participant is $\alice$ and the right participant is $\bob$. Then we can picture the exchange $A^\circ \otimes B^\bullet \otimes C^\bullet$ as:
\[
\alice \rightsquigarrow
\includegraphics[height=1cm,align=c]{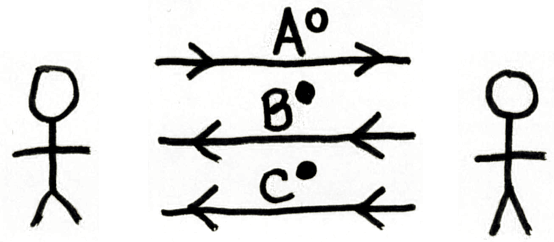}
\leftsquigarrow \bob
\]
Think of these exchanges as happening \emph{in order}. For example the exchange pictured above demands that first $\alice$ gives $\bob$ an instance of $A$, then $\bob$ gives $\alice$ an instance of $B$, and then finally $\bob$ gives $\alice$ an instance of $C$.

Cells of $\corner{\A}$ can be understood as \emph{interacting} morphisms of $\A$. Each cell is a method of obtaining the bottom boundary from the top boundary by participating in $\A$-valued exchanges along the left and right boundaries in addition to using the arrows of $\A$. For example, if the morphisms of $\A$ describe processes involved in baking bread, we might have the following cells of $\corner{\A}$:
\begin{mathpar}
\includegraphics[height=1.7cm,align=c]{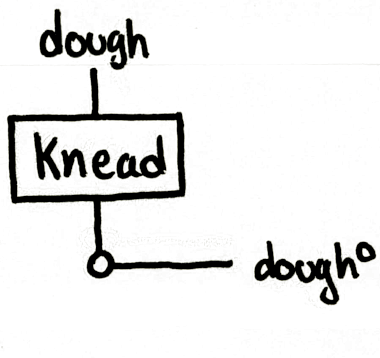}

\includegraphics[height=1.7cm,align=c]{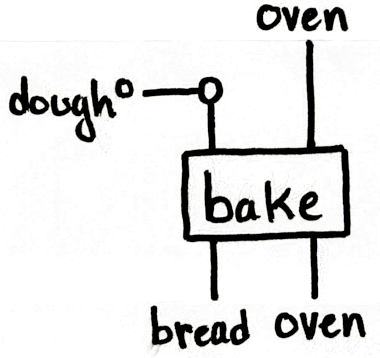}

\includegraphics[height=3cm,align=c]{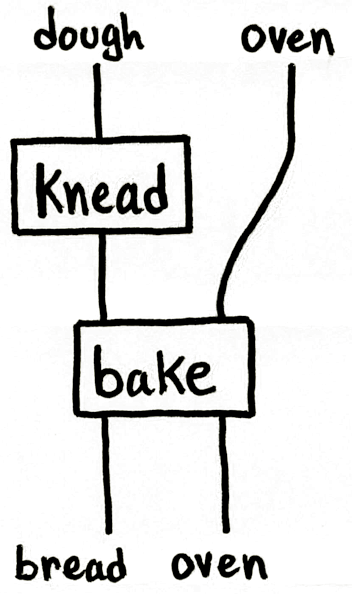}
\end{mathpar}
The cell on the left describes a procedure for transforming \texttt{dough} into nothing by \texttt{knead}ing it and sending the result away along the right boundary, and the cell in the middle describes a procedure for transforming an \texttt{oven} into \texttt{bread} and an \texttt{oven} by receiving \texttt{dough} along the left boundary and then using the \texttt{oven} to \texttt{bake} it. Composing these cells horizontally results in the cell on the right via the yanking equations. In this way the free cornering models concurrent interaction, with the corner cells capturing the flow of information across different components.

The vertical cells of the free cornering involve no exchanges, and as such are the cells of the original monoidal category:
\begin{lemma}[\cite{Nes21}]\label{lem:verticalcells}
There is an isomorphism of categories $\bv\,\corner{\A} \cong \A$. 
\end{lemma}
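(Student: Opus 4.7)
The plan is to exhibit the isomorphism via the functor $F \colon \A \to \bv\,\corner{\A}$ that is the identity on objects and sends each morphism $f \colon A \to B$ to the generating vertical cell $\corner{f}$. The defining equations imposed on the generators $\corner{f}$—namely that $\corner{1_A}$ is the vertical identity $\square_A$ and that $\corner{g \circ f} = \frac{\corner{f}}{\corner{g}}$—are exactly the unit and composition laws of a functor, so $F$ is well-defined. Since $F$ is the identity on objects, it remains to show it is bijective on hom-sets.

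For surjectivity I would argue by structural induction on cells that every vertical cell of $\corner{\A}$ is of the form $\corner{f}$ for some $f$ in $\A$. The generators are either already of this form or are corner cells with non-trivial vertical boundaries. Vertical composition of two cells in the image stays in the image by the defining equations, and horizontal composition of two such cells likewise stays in the image via the equation $\corner{f}\vert\corner{g} = \corner{f \otimes g}$ that is built into (or follows from) the generating equations. Any composite cell with trivial vertical boundary must therefore arrange its corner-cell constituents into matched pairs that the yanking equations collapse to identities, reducing the whole cell first to a composite of $\corner{f}$'s and then, via the two previous equations, to a single $\corner{f}$.

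For injectivity I would invoke the universal property of the free cornering: it suffices to exhibit a single object double category $\D$ together with an interpretation of each generator satisfying all generating equations, in which distinct morphisms of $\A$ translate to distinct vertical cells under $\corner{(-)}$. A natural choice of $\D$ has the same horizontal and vertical edge monoids as $\corner{\A}$, has vertical cells equal by construction to the morphisms of $\A$, and interprets the corner cells as the obvious ``relabelling'' squares witnessing that a horizontal edge $A$ is simultaneously the companion and conjoint of the polarized wires $A^\circ$ and $A^\bullet$. The induced double functor $\corner{\A} \to \D$ restricts on vertical cells to a map $\corner{f} \mapsto f$, which is the desired inverse $G$ to $F$.

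The main obstacle is constructing $\D$, i.e.\ defining cells with non-trivial vertical boundaries so that horizontal composition, vertical composition, interchange, and yanking all hold strictly. This is the standard exercise of freely adjoining companion and conjoint structure to an already-equipped object, and—as noted after the definition of $\corner{\A}$—one can appeal to the literature on proarrow equipments rather than building $\D$ from scratch. Once $\D$ is in place, both $GF = 1_{\A}$ and $FG = 1_{\bv\,\corner{\A}}$ are immediate from the constructions, completing the isomorphism.
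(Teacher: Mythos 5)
The paper does not actually prove this lemma --- it is imported verbatim from~\cite{Nes21} --- so there is no in-text proof to compare against; what follows assesses your argument on its own terms. Your overall strategy (the evident functor $F$ induced by the generating equations for $\corner{f}$, plus a retraction obtained from the universal property of the free construction) is the standard shape of the argument, and functoriality of $F$ is indeed immediate. But both halves have genuine gaps as written. For surjectivity, a structural induction ``showing that every vertical cell is of the form $\corner{f}$'' cannot close: a vertical cell can arise as a horizontal composite $\alpha \vert \beta$ of cells that are \emph{not} themselves vertical (this is exactly the baking example in Section~\ref{sec:singledouble}), so an induction hypothesis quantified only over vertical cells says nothing about the pieces. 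The sentence ``must therefore arrange its corner-cell constituents into matched pairs that the yanking equations collapse'' is precisely the claim that needs proof; establishing it requires a normal-form theorem for \emph{arbitrary} cells of the free double category (via~\cite{Fio08}) or the geometric soundness result of~\cite{Mye16} --- the same machinery this paper leans on to prove Lemma~\ref{lem:coendcorrespondence}.

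For injectivity, the target $\D$ is the entire content of that half, and you defer rather than construct it. Citing the general literature on proarrow equipments does not hand you a single object double category with the prescribed vertical edge monoid $(\A_0 \times \{\circ,\bullet\})^*$, vertical cells exactly the morphisms of $\A$, and genuine companion/conjoint cells satisfying yanking strictly; the ``obvious relabelling squares'' must be given as concrete data (what is a cell whose left boundary is $A^\circ \otimes B^\bullet$?), and horizontal composition, interchange, and the yanking equations must be verified for them. Until such a $\D$ is exhibited, the double functor $\corner{\A} \to \D$ and hence the retraction $G$ with $GF = 1_\A$ do not exist. In short: the skeleton is right and matches the cited source's approach, but the two load-bearing steps --- the normal form for cells and the concrete model receiving the corners --- are exactly the parts left unproved.
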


In comparison, the horizontal cells of the free cornering are not well understood.
In the sequel we will see that $\bh\,\corner{\A}$ contains $\mathsf{Optic}_\A$ as a full subcategory. 
\section{Alternation and Coends}\label{sec:alternationlemma}
In this section we prove a technical lemma characterizing certain cell-sets of $\corner{\A}$ as coends. 

\begin{definition}
  An element of $\corner{\A}_V$ is said to be \emph{$\bullet\circ$-alternating} in case it is of the form $A_1^\bullet \otimes B_1^\circ \otimes \cdots \otimes A_n^\bullet \otimes B_n^\circ$ for some $n \in \mathbb{N}$ such that $n > 0$. The \emph{alternation length} of a $\bullet\circ$-alternating element is defined to be the evident $n \in \mathbb{N}$. For example:
\begin{itemize}
\item $B^\bullet \otimes A^\circ$ is $\bullet\circ$-alternating with alternation length $1$. 
\item $A^\bullet \otimes B^\circ \otimes C^\bullet \otimes A^\circ$ is $\bullet\circ$-alternating with alternation length $2$.
\item $(A \otimes B)^\bullet \otimes I^\circ$ is $\bullet\circ$-alternating with alternation length $1$.
\item None of the following are $\bullet\circ$-alternating:
  \begin{mathpar}
    I
    
    A^\bullet \otimes B^\circ \otimes C^\circ
    
    A^\bullet \otimes B^\bullet
    
    A^\bullet
    
    (A \otimes B)^\circ \otimes B^\bullet
    
    A^\bullet \otimes B^\circ \otimes C^\bullet
    \end{mathpar}
\end{itemize}
\end{definition}

\begin{definition}
  A cell-set of the form $\corner{\A}\cells{I}{I}{I}{X}$ is said to be \emph{right-$\bullet\circ$-alternating} in case $X$ is $\bullet\circ$-alternating. The \emph{alternation depth} of a right-$\bullet\circ$-alternating cell-set is the alternation length of its right boundary. 
\end{definition}

\begin{lemma}\label{lem:coendcorrespondence}
  If $\corner{\A}\cells{I}{I}{I}{X}$ is right-$\bullet\circ$-alternating with alternation depth $n$ and $X = A_1^\bullet \otimes B_1^\circ \otimes \cdots \otimes A_n^\bullet \otimes B_n^\circ$ then
  \[
  \corner{\A}\floatcells{I}{I}{I}{X}
  \cong
  \int^{M_1,\ldots,M_{n-1}}\prod^n_{i=1}\A(M_{i-1} \otimes A_i, M_i \otimes B_i)
  \]
  where $M_0 = M_n = I$. 
\end{lemma}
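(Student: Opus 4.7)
The plan is to exhibit explicit maps in both directions between $\corner{\A}\cells{I}{I}{I}{X}$ and the coend and verify they are mutually inverse. We rely on the graphical calculus of the free cornering throughout, using Myers' soundness theorem~\cite{Mye16} and Lemma~\ref{lem:verticalcells} which identifies vertical cells with morphisms of $\A$.

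For the map from the coend to the cell-set, take a representative $(M_1,\ldots,M_{n-1};f_1,\ldots,f_n)$ with $f_i : M_{i-1} \otimes A_i \to M_i \otimes B_i$. Build a staircase-shaped cell by composing corner cells and the $\corner{f_i}$ so that the $A_i$-input of each $\corner{f_i}$ is sourced from the corresponding $A_i^\bullet$ on the right boundary via a corner cell, the $B_i$-output is routed back to the corresponding $B_i^\circ$ via the opposite corner, and the $M_i$ wires form internal connections between $\corner{f_i}$ and $\corner{f_{i+1}}$. Because $M_0 = M_n = I$, the extremal $M$-wires vanish, and we obtain a cell with the prescribed boundary. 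To see this descends to the coend, observe that dinaturality in $M_i$ corresponds to the equation obtained by inserting a vertical cell $\corner{g}$ for some $g : M_i \to M_i'$ on the internal $M_i$-wire, which by functoriality of $\corner{-}$ may be absorbed into either $\corner{f_i}$ or $\corner{f_{i+1}}$ to yield the same resulting cell.

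For the map from the cell-set to the coend, we need a normal form theorem: every cell with boundary $X$ equals some staircase as above. We argue by induction on the alternation depth $n$. The base case $n = 1$ follows from Lemma~\ref{lem:verticalcells} after using the corner cells and their yanking equations to bend $A_1^\bullet$ and $B_1^\circ$, converting the cell into a vertical cell $A_1 \to B_1$. In the inductive step, any cell $c$ is rearranged using interchange and yanking so that its interaction with the topmost $A_1^\bullet \otimes B_1^\circ$ on the right boundary is separated from the rest of $c$ by a single horizontal wire carrying some object $M_1$ of $\A$; this yields a morphism $f_1 : A_1 \to M_1 \otimes B_1$ in the top slice and a residual cell of alternation depth $n-1$ whose right boundary begins with $M_1^\bullet$. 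The main obstacle is precisely this slice decomposition: one must verify that the axioms of the free cornering suffice to push all interactions with the first two symbols of the right boundary into a single cleanly separated block. The key simplification is that the left boundary is $I$, so no communication escapes that side, and this combined with the topological rigidity of the graphical calculus forces the desired factorisation. Different choices of $M_1$ correspond exactly to the dinaturality identifications of the coend, so the backward map is well-defined; a direct graphical computation then shows it inverts the forward map.
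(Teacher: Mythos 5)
Your overall architecture --- a staircase normal form together with the observation that sliding along the internal $M_i$-wires realises the dinaturality relations of the coend --- is the same as the paper's, and your forward map (including the check that it respects dinaturality by absorbing $\corner{g}$ into a neighbouring $\corner{f_i}$) is fine. But the proposal leaves the two load-bearing steps unproven, and you flag one of them yourself. First, the normal form: your inductive slice decomposition requires showing that every cell of $\corner{\A}\cells{I}{I}{I}{X}$ can be rearranged so that its interaction with $A_1^\bullet \otimes B_1^\circ$ is separated from the rest by a single $M_1$-wire, and ``the topological rigidity of the graphical calculus'' is not an argument --- it is precisely the content of the soundness theorem of~\cite{Mye16}, which you cite at the outset but never actually deploy. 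The paper avoids the induction entirely: since $\corner{\A}$ is \emph{freely} generated, every cell is a horizontal/vertical composite of generating cells, and inspecting which such composites can have trivial left, top and bottom boundaries and right boundary $X$ (using Lemma~\ref{lem:verticalcells} to collapse the purely vertical parts) forces the staircase shape directly.

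Second, and more seriously, the well-definedness of your backward map is not established. You must show that if two diagrams denote the \emph{same} cell of $\corner{\A}$, then their staircase normal forms are identified in the coend. Your remark that ``different choices of $M_1$ correspond exactly to the dinaturality identifications'' only addresses the ambiguity internal to your own slicing procedure; it says nothing about two arbitrary presentations of the same cell being related. This is exactly where the paper invokes Myers' result in earnest: equality of cells is deformability modulo the equations of $\A$, every local deformation of a staircase is of the sliding form $f_i = g_i(m \otimes 1)$, $g_{i+1} = (m \otimes 1)f_{i+1}$, and hence the equivalence relation on staircases induced by cell equality is \emph{exactly} the coend relation --- no more and no less. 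Without this step you have at best a surjection onto the cell-set with no control over its fibres, and the claimed isomorphism does not follow.
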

\begin{proof} By inspecting the generating cells of $\corner{\A}$ and making use of Lemma~\ref{lem:verticalcells} we find that any cell of $\corner{\A}\cells{I}{I}{I}{X}$ is necessarily of the form:
  \[
  \includegraphics[height=5cm,align=c]{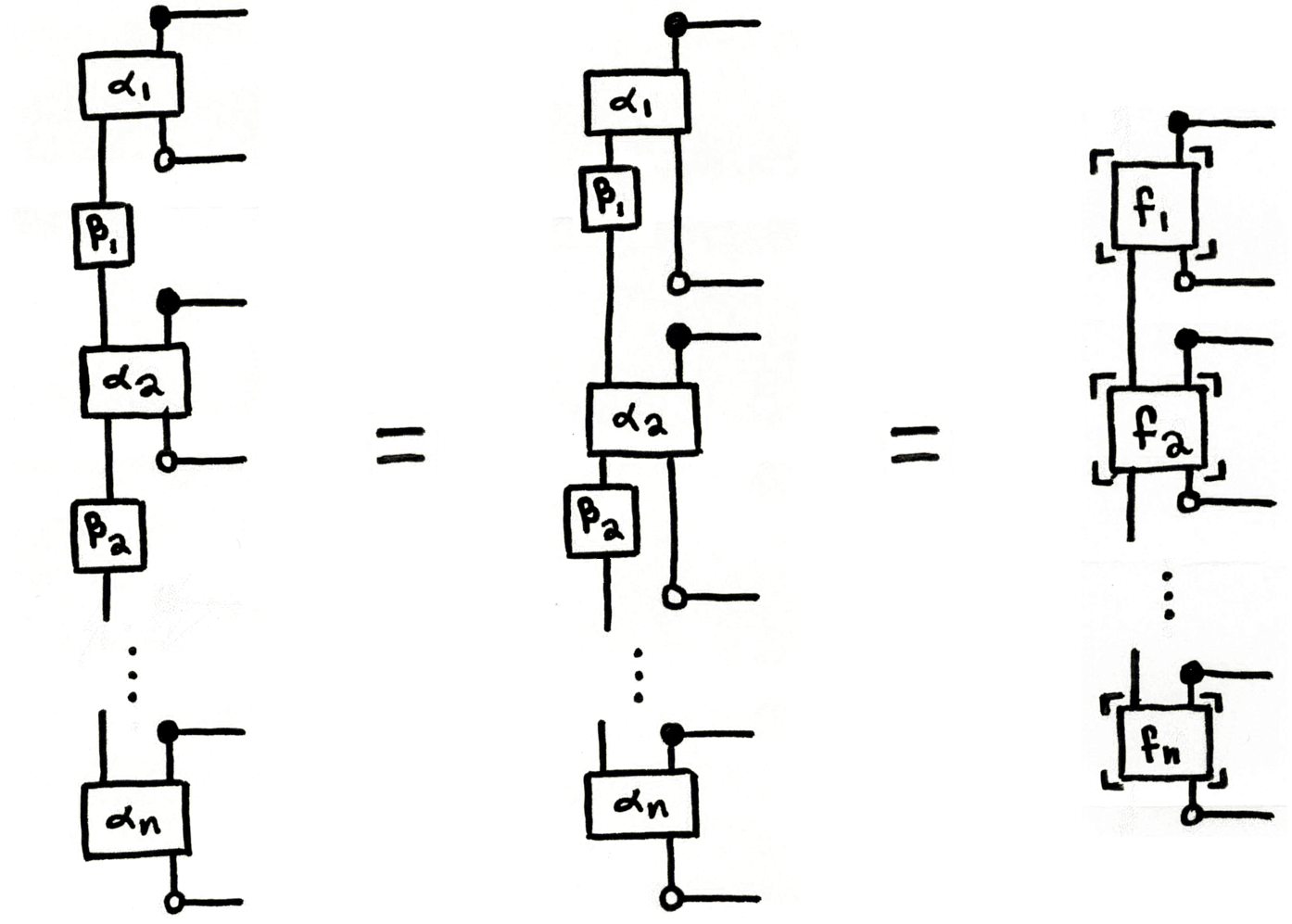}
  \]
  Thus cells of $\corner{\A}\cells{I}{I}{I}{X}$ may be written as $n$-tuples $\langle f_1 \mid \cdots \mid f_n \rangle$. As a consequence of Myers' soundness result for the graphical calculus~\cite{Mye16}, we know that two cells $\langle f_1 \mid \cdots \mid f_n \rangle$ and $\langle g_1 \mid \cdots \mid g_n \rangle$ of $\corner{\A}\cells{I}{I}{I}{X}$ are equal iff they are deformable into each other modulo the equations of $\A$. Consider that all \emph{local} deformations $\langle \cdots \mid f_i \mid f_{i+1} \mid \cdots \rangle = \langle \cdots \mid g_i \mid g_{i+1} \mid \cdots \rangle$ are of the form:
\[
\includegraphics[height=3.7cm,align=c]{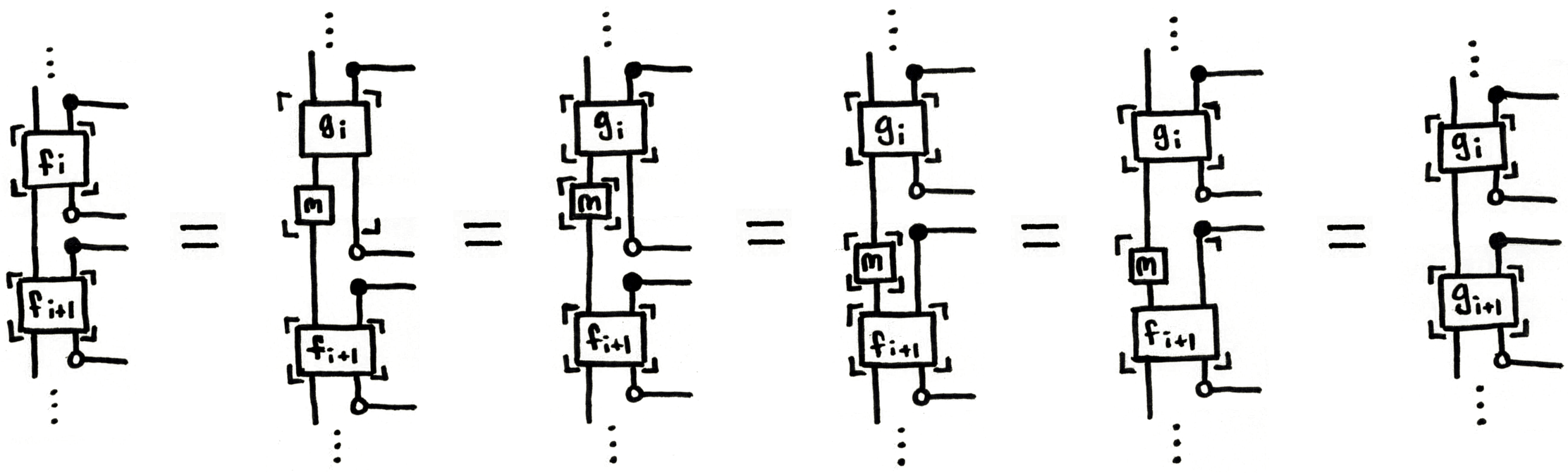}
\]
where $f_i = g_i(m \otimes 1)$ and $g_{i+1} = (m \otimes 1)f_{i + 1}$. Now, the only way $\langle f_1 \mid \cdots \mid f_n \rangle$ and $\langle g_1 \mid \cdots \mid g_n \rangle$ can be equal is by (repeated) parallel local deformation of the associated diagrams, as in:
\[
\includegraphics[height=5cm,align=c]{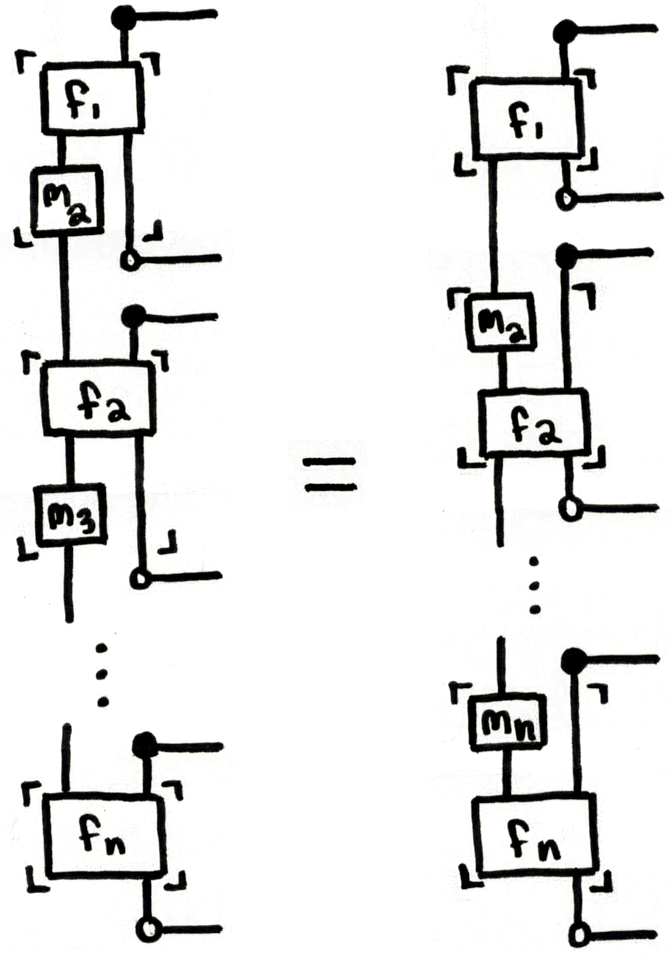}
\]
Thus, $\corner{\A}\cells{I}{I}{I}{X}$ is the set of (appropriately typed) $n$-tuples $\langle f_1 \mid \cdots \mid f_n \rangle$ of morphisms of $\A$, quotiented by equations of the form:
\[ \langle f_1(m_2 \otimes 1) \mid f_2(m_3 \otimes 1) \mid \cdots \mid f_n \rangle = \langle f_1 \mid (m_2 \otimes 1)f_2 \mid \cdots \mid (m_n \otimes 1)f_n \rangle \]
which is precisely to say that the claim holds.
\end{proof}

\begin{remark}\label{rmk:alternationdual}
  There is an obvious dual notion of \emph{left-$\circ\bullet$-alternating} cell-set for which a version of Lemma~\ref{lem:coendcorrespondence} holds.
\end{remark}

\section{Optics and the Free Cornering}\label{sec:mainresults}
In this section we use Lemma \ref{lem:coendcorrespondence} to show that $\mathsf{Optic}_\A$ is a full subcategory of $\bh\,\corner{\A}$ for any monoidal category $\A$. We then briefly discuss lenses, and illustrate the power of the graphical calculus for $\corner{\A}$ by reproving a correspondence between lenses satisfying the the lens laws and lenses that are comonoid homomorphisms with respect to a certain comonoid structure. These results about lenses are originally due to Riley~\cite{Riley2018}, and were also used to demonstrate Boisseau's approach to string diagrams for optics~\cite{Boisseau2020}. We end with Observation~\ref{obs:tele}, which discusses the relation of teleological categories~\cite{Hedges2017} to the free cornering. 

\begin{theorem}\label{thm:fullsubcat}
  Let $\A$ be a monoidal category. Then $\mathsf{Optic}_\A$ is the full subcategory of $\bh\,\corner{\A}$ on objects of the form $A^\circ \otimes B^\bullet$ for $A,B \in \A_0$. 
\end{theorem}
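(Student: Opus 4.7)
The plan is to exhibit, for each pair $(A,B),(C,D)$ of objects of $\A$, a natural bijection between the hom-set $\bh\,\corner{\A}(A^\circ \otimes B^\bullet,\, C^\circ \otimes D^\bullet) = \corner{\A}\floatcells{A^\circ \otimes B^\bullet}{I}{I}{C^\circ \otimes D^\bullet}$ and $\mathsf{Optic}_\A((A,B),(C,D))$, and then to check that this bijection carries horizontal composition and horizontal identities in $\bh\,\corner{\A}$ to optic composition and identities. Since the assignment $(A,B) \mapsto A^\circ \otimes B^\bullet$ is injective on objects, a bijection on hom-sets of the required form is exactly what is needed to identify $\mathsf{Optic}_\A$ with the corresponding full subcategory.

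The central computation uses the corner cells to rotate the left boundary onto the right. By yanking, the incoming $A^\circ$ on the left can be routed around the top-left and top-right corners to appear as $A^\bullet$ at the top of the right boundary, and the outgoing $B^\bullet$ on the left can be routed around the bottom to appear as $B^\circ$ at the bottom of the right boundary; the inverse just routes these back using the opposite corners, and the yanking equations make these mutually inverse. This gives an isomorphism of cell-sets
\[
\corner{\A}\floatcells{A^\circ \otimes B^\bullet}{I}{I}{C^\circ \otimes D^\bullet}
\;\cong\;
\corner{\A}\floatcells{I}{I}{I}{A^\bullet \otimes C^\circ \otimes D^\bullet \otimes B^\circ}.
\]
The right-hand side is right-$\bullet\circ$-alternating with alternation depth $2$, so Lemma~\ref{lem:coendcorrespondence} (with $A_1 = A$, $B_1 = C$, $A_2 = D$, $B_2 = B$, and $M_0 = M_2 = I$) yields
\[
\corner{\A}\floatcells{I}{I}{I}{A^\bullet \otimes C^\circ \otimes D^\bullet \otimes B^\circ}
\;\cong\;
\int^{M} \A(A, M \otimes C) \times \A(M \otimes D, B),
\]
which is precisely $\mathsf{Optic}_\A((A,B),(C,D))$. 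Composing these two isomorphisms gives the desired bijection on hom-sets.

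To conclude I would verify functoriality. Horizontally composing two cells in $\bh\,\corner{\A}$ and then rotating all outer boundaries to the right, the corners in the middle cancel by yanking, leaving a $3$-tuple which, after a further slide past the shared residual object, matches exactly the formula $\langle \alpha \mid \beta \rangle_M \langle \gamma \mid \delta \rangle_N = \langle \alpha(1 \otimes \gamma) \mid (1 \otimes \delta)\beta \rangle_{M \otimes N}$; the horizontal identity on $A^\circ \otimes B^\bullet$ unfolds to the pair $(1_A, 1_B)$, i.e.\ $\langle 1_A \mid 1_B \rangle_I$. The most delicate step is the rotation bijection, since one must check that it is well-defined on cells modulo the double-category axioms and the equations of $\A$; once that is in hand, both functoriality and fullness are immediate diagram chases in the graphical calculus, justified by the same soundness result invoked in the proof of Lemma~\ref{lem:coendcorrespondence}.
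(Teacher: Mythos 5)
Your proposal is correct and follows essentially the same route as the paper: bend the left boundary onto the right using the corner cells and yanking to obtain the cell-set $\corner{\A}\floatcells{I}{I}{I}{A^\bullet \otimes C^\circ \otimes D^\bullet \otimes B^\circ}$, apply Lemma~\ref{lem:coendcorrespondence} at alternation depth $2$ to identify it with the optic hom-coend, and then verify compatibility with composition and identities graphically. The paper's proof is exactly this argument, so no further comparison is needed.
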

\begin{proof}
  We begin by noticing that
  \[
  \bh\,\corner{\A}(A^\circ \otimes B^\bullet,C^\circ \otimes D^\bullet)
  \cong
  \corner{\A}\cells{I}{I}{I}{A^\bullet \otimes C^\circ \otimes D^\bullet \otimes B^\circ}
  \]
  via:
  \[
  \includegraphics[height=1.2cm,align=c]{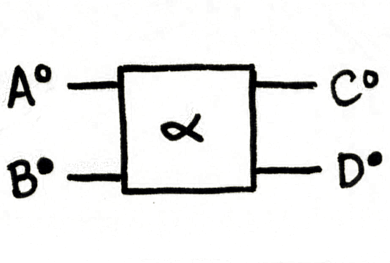}
  \hspace{0.3cm}
  \mapsto
  \hspace{0.3cm}
  \includegraphics[height=1.2cm,align=c]{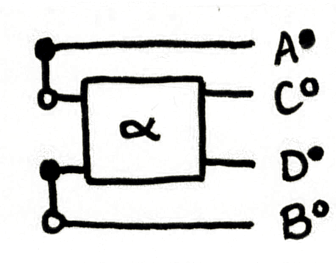}
  \hspace{1cm}
  \text{and}
  \hspace{1cm}
  \includegraphics[height=1.2cm,align=c]{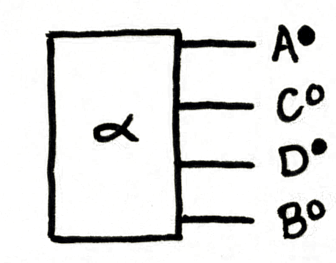}
  \hspace{0.3cm}
  \mapsto
  \hspace{0.3cm}
  \includegraphics[height=1.2cm,align=c]{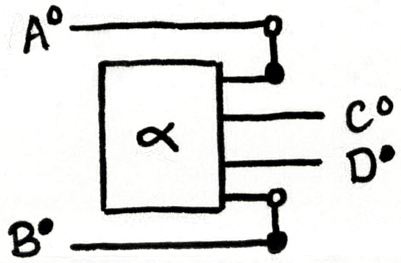}
  \]
  This cell-set is right-$\bullet\circ$-alternating of depth 2, and so we have:
  \[
  \corner{\A}\floatcells{I}{I}{I}{A^\bullet \otimes C^\circ \otimes D^\bullet \otimes B^\circ}
  \cong
  \int^{M \in \A} \A(A,M \otimes C) \times \A(M \otimes D,B) 
  \]
  Now we already know that
  \[
  \int^{M \in \A} \A(A,M \otimes C) \times \A(M \otimes D,B) \\
  \cong
  \mathsf{Optic}_\A((A,B),(C,D))
  \]
  and so we have a correspondence between arrows of $\bh\,\corner{\A}$ and arrows of $\mathsf{Optic}_\A$:
  \[
  \bh\,\corner{\A}(A^\circ \otimes B^\bullet,C^\circ \otimes D^\bullet)
  \cong
  \mathsf{Optic}_\A((A,B),(C,D))
  \]
  In particular, we know that arrows in $\bh\,\corner{\A}(A^\circ \otimes B^\bullet ,C^\circ \otimes D^\bullet)$ are equivalently optics $\langle \alpha \mid \beta \rangle_M$ as below left, and that the equations between optics -- below right -- capture all equations in $\bh\,\corner{\A}(A^\circ \otimes B^\bullet,C^\circ \otimes D^\bullet)$:
  \begin{mathpar}
  \includegraphics[height=2.5cm,align=c]{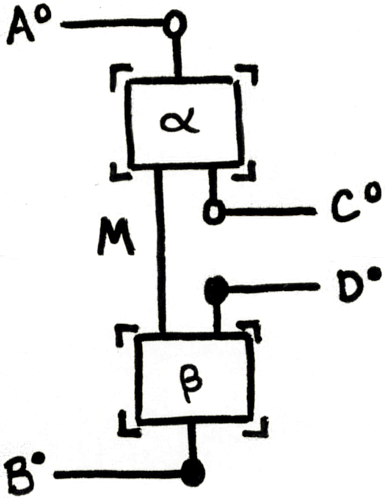}

  \includegraphics[height=2.5cm,align=c]{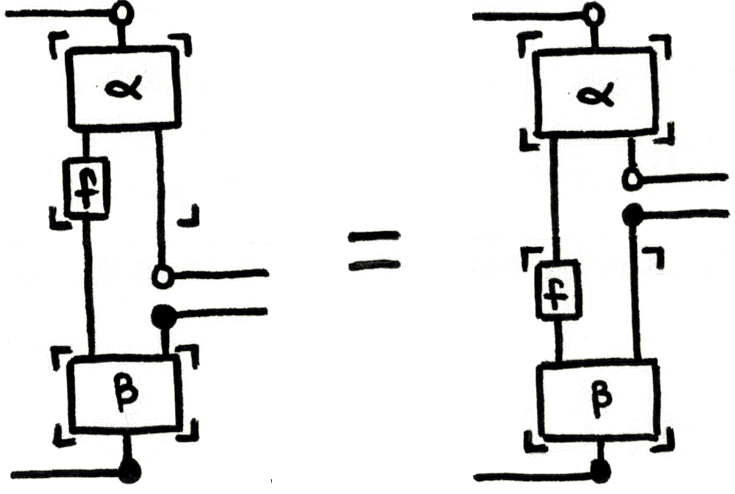}
  \end{mathpar}
  Next, given arrows $\langle \alpha \mid \beta \rangle_M : (A,B) \to (C,D)$ and $\langle \gamma \mid \delta \rangle_N : (C,D) \to (E,F)$ of $\mathsf{Optic}_\A$, we find that composing the corresponding arrows of $\bh\,\corner{\A}$ yields the arrow corresponding to $\langle \alpha(1_M \otimes \gamma) \mid (1_M \otimes \delta)\beta \rangle_{M \otimes N} = \langle \alpha \mid \beta \rangle_M \langle \gamma \mid \delta \rangle_N$ as in:
  \[
  \includegraphics[height=3.5cm,align=c]{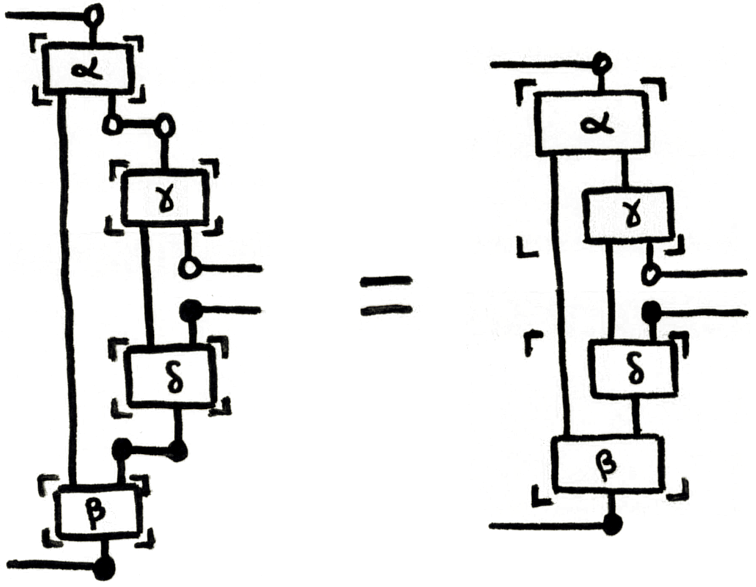}
  \]
  Further, the identity on $A^\circ \otimes B^\bullet$ in $\bh\,\corner{\A}$ corresponds to the $1_{(A,B)} = \langle 1_A \mid 1_B \rangle_I$ in $\mathsf{Optic}_\A$ as in:
  \[
  \includegraphics[height=1.5cm,align=c]{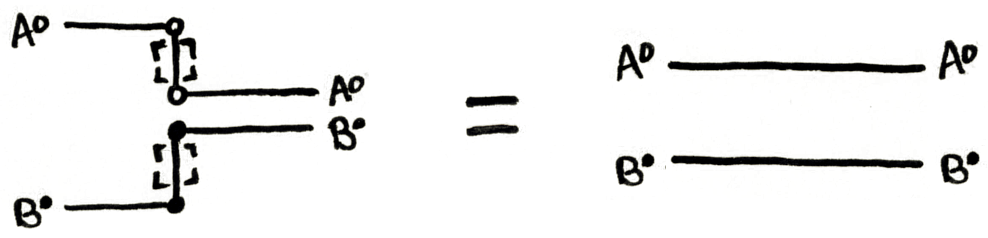}
  \]
  The result is thus proven.
\end{proof}

\begin{remark}\label{rmk:opoptics}
  Following Remark~\ref{rmk:alternationdual}, a similar argument gives that if $A$ is symmetric monoidal then $\bh\,\corner{\A}^{op}$ also contains $\mathsf{Optic}_\A$ as the full subcategory on those objects of the form $A^\bullet \otimes B^\circ$.
\end{remark}

\begin{remark}
  If $\A$ is a \emph{symmetric} monoidal category then $\mathsf{Optic}_\A$ is itself monoidal~\cite{Riley2018}. We remark that while $\mathsf{Optic}_\A$ remains a subcategory of $\bh\,\corner{\A}$ in this case, it is not a \emph{monoidal} subcategory. That is, the tensor product of optics is \emph{not} given by the tensor product in $\bh\,\corner{\A}$. 
\end{remark}

As an illustration of our approach, we consider the characterisation of the lens laws given in \cite{Riley2018}. Say that an optic is \emph{homogeneous} in case it is contained in the full subcategory of $\mathsf{Optic}_\A$ on objects $(A,A)$ for some $A \in \A_0$. Notice that every object of this subcategory is a comonoid in $\bh\,\corner{\A}$ , with the comultiplication and counit given as in:
\begin{mathpar}
\includegraphics[height=1.2cm,align=c]{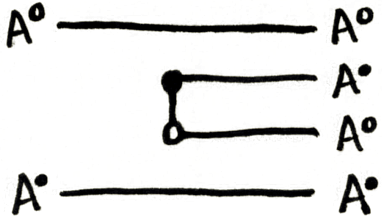}

\includegraphics[height=1.2cm,align=c]{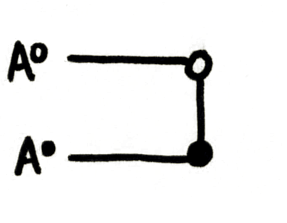}
\end{mathpar}
where the comonoid axioms hold as in:
\begin{mathpar}
\includegraphics[height=1.2cm,align=c]{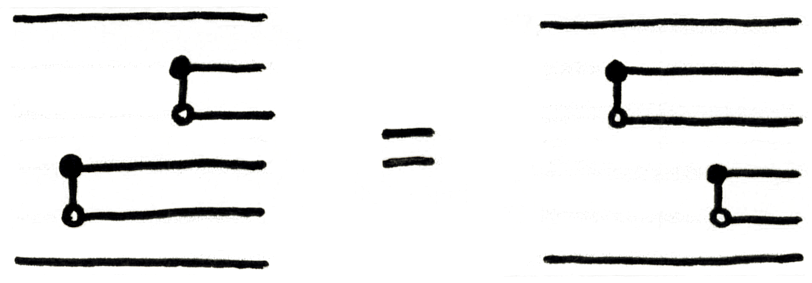}

\includegraphics[height=1.2cm,align=c]{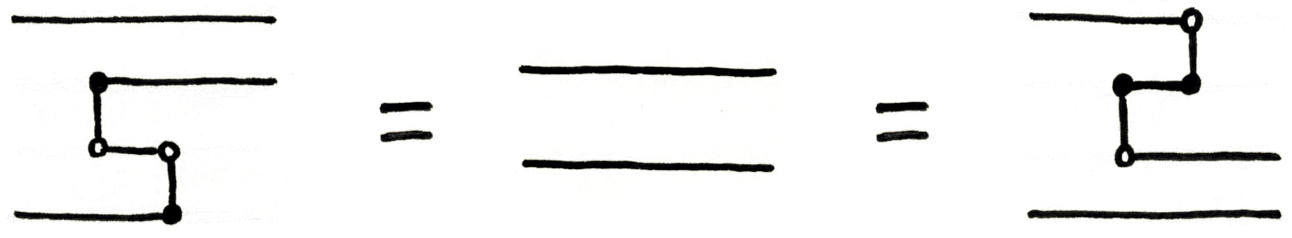}
\end{mathpar}

\begin{definition}[\cite{Riley2018}]
  A homogeneous optic $h : (A,A) \to (B,B)$ of $\mathsf{Optic}_\A$ is called \emph{lawful} in case the following equations hold in $\bh\,\corner{\A}$:
  \begin{mathpar}
    \includegraphics[height=1.2cm,align=c]{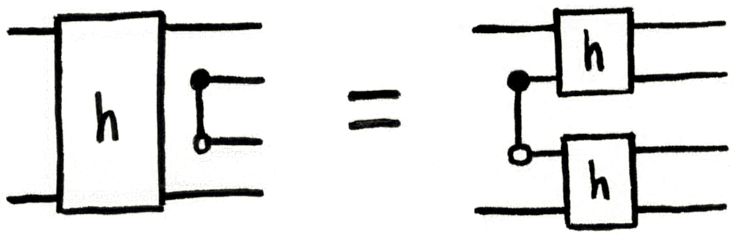}
    
    \includegraphics[height=1.2cm,align=c]{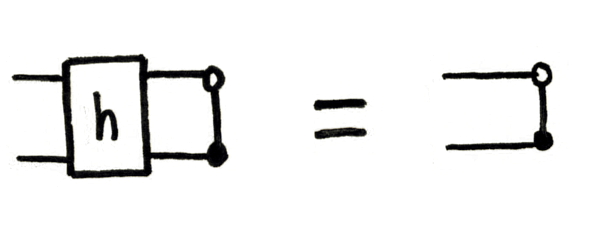}
  \end{mathpar}
  That is, in case $h$ is a comonoid homomorphism with respect to the comonoid structure given above. 
\end{definition}

\begin{lemma}[\cite{Riley2018}]\label{lem:inverselawful}
  If $h = \langle \alpha \mid \beta \rangle_M : (A,A) \to (B,B)$ in $\mathsf{Optic}_\A$ with $\alpha$ and $\beta$ mutually inverse, then $h$ is lawful. 
\end{lemma}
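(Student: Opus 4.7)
The plan is to work entirely in the graphical calculus of $\corner{\A}$, representing $h = \langle \alpha \mid \beta \rangle_M$ as the cell in which $\corner{\alpha}$ and $\corner{\beta}$ are glued along an internal $M$-wire, while the outer $A$- and $B$-wires bend through corner cells to reach the left and right boundaries. Both lawfulness axioms will reduce to uses of the inverse relations $\alpha\beta = 1_A$ and $\beta\alpha = 1_{M \otimes B}$ (in the paper's diagrammatic juxtaposition convention) together with the yanking equations.

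For the counit (leg) axiom, horizontally composing $h$ with the counit on $(B,B)$ joins, via a corner, the $B^\circ$-wire leaving $\corner{\alpha}$ to the $B^\bullet$-wire entering $\corner{\beta}$. Together with the internal $M$-wire, this produces the cornered image of $\alpha\beta$, which by hypothesis is $\corner{1_A}$. The remainder of the diagram is a single $A$-wire bent through the outer corners, which is precisely the counit on $(A,A)$.

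For the comultiplication (pants) axiom, I would draw both $h$ followed by the $(B,B)$-comultiplication and the $(A,A)$-comultiplication followed by $h \otimes h$. The former contains a single $\alpha$-$M$-$\beta$ block alongside a $(B,B)$ pants, while the latter contains two such blocks alongside an $(A,A)$ pants. The strategy is to insert the identity $\alpha\beta = 1_A$ along an appropriate $A$-wire of the former and then re-route, using only yanking together with the sliding equations of Lemma~\ref{lem:coendcorrespondence}, so as to produce two $\alpha$-$M$-$\beta$ blocks joined through an $(A,A)$ pants.

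The main obstacle is the planar bookkeeping in the pants case. After inserting a fresh $\alpha$-$\beta$ pair there are several candidate ways to connect the newly introduced $A$-, $B$-, and $M$-wires to the outer corners, and one must check that the routing matching the $(A,A)$ pants topology is admissible under the sliding equations. The counit case, by contrast, is essentially immediate once the single inverse identity $\alpha\beta = 1_A$ has been applied.
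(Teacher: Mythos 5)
Your counit argument is correct and matches the paper's: after capping the $B$-wires, the output of $\alpha$ feeds directly into $\beta$ along with the residual $M$, the composite $\alpha\beta = 1_A$ collapses, and what remains is exactly the counit on $(A,A)$.

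The pants case, however, contains a genuine gap, and it is exactly the step you flag as ``the main obstacle.'' You propose to insert $\alpha\beta = 1_A$ along an $A$-wire of $h$ followed by the $(B,B)$-comultiplication and then re-route. But that cell has no internal $A$-wire: written as a tuple in the sense of Lemma~\ref{lem:coendcorrespondence} it is $\langle \alpha \mid 1_{M \otimes B} \mid \beta \rangle$, so the only $A$-wires are the external boundary ones, and inserting $\alpha\beta$ there reproduces the same cell without creating the second $\alpha$--$\beta$ block in the position you need. Nor can the resulting extra $\beta$-then-$\alpha$ composite be re-routed across a tooth gap: the sliding equations only move morphisms of the form $m \otimes 1$ along the residual wire, whereas $\beta$ followed by $\alpha$ is an endomorphism of $M \otimes B$ that genuinely involves the $B$-wire, and the $B$ sent out in one gap is a different wire from the $B$ received in the next. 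The identity you actually need is the other inverse law. The comultiplication on $(A,A)$ followed by two copies of $h$ is the tuple $\langle \alpha \mid \beta\alpha \mid \beta \rangle$ (first $\beta$, then $\alpha$, across the internal $A$-wire joining the two copies of $h$), and $\beta\alpha = 1_{M \otimes B}$ collapses its middle slot to give $\langle \alpha \mid 1_{M \otimes B} \mid \beta \rangle$ on the nose; equivalently, insert $\beta\alpha = 1_{M \otimes B}$ on the $M$- and $B$-wires in the middle gap of $h$ followed by the $(B,B)$-comultiplication. With the correct identity in the correct place there is no residual planar bookkeeping: the pants case is just as immediate as the counit case, which is why the paper's proof is a short diagram chain using $\alpha\beta = 1_A$ for the counit and $\beta\alpha = 1_{M \otimes B}$ for the comultiplication.
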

\begin{proof}
  \begin{mathpar}
    \includegraphics[height=3.5cm,align=c]{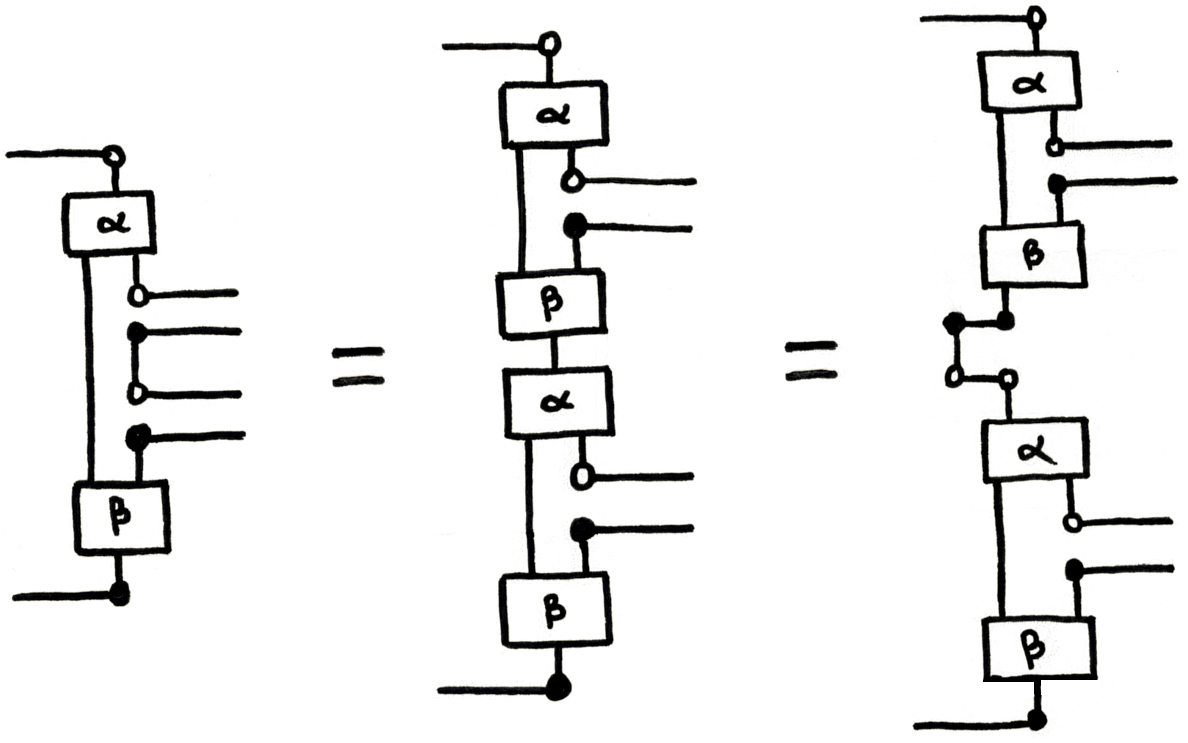}

    \text{and}
    
    \includegraphics[height=2cm,align=c]{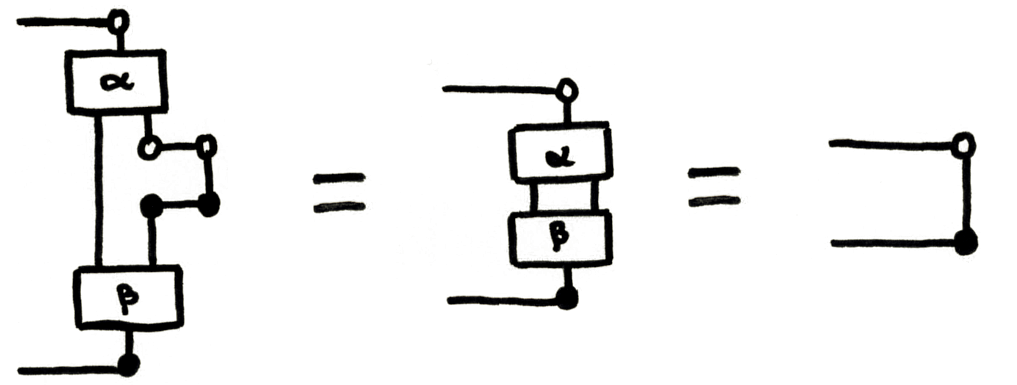}    
  \end{mathpar}
\end{proof}

Recalling the algebraic characterisation of cartesian monoidal categories~\cite{Fox76}, we denote the commutative comonoid structure in a cartesian monoidal category as follows:
\begin{mathpar}
  \includegraphics[height=1.7cm,align=c]{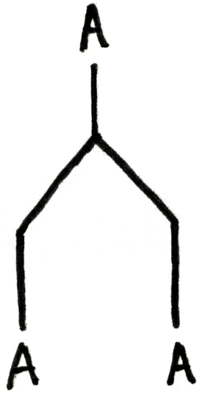}

  \includegraphics[height=1.7cm,align=c]{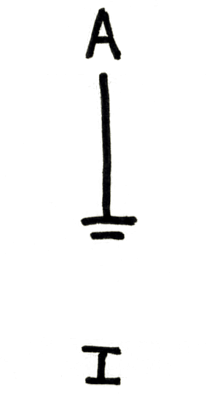}
\end{mathpar}
This structure must satisfy the commutative comonoid axioms:
\begin{mathpar}
  \includegraphics[height=1.2cm,align=c]{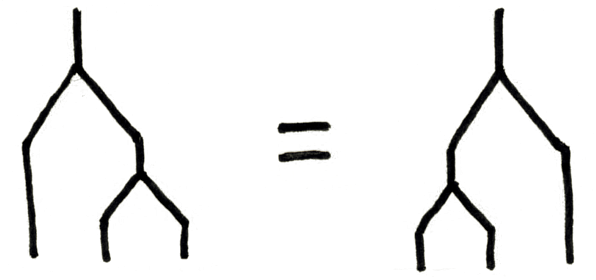}

  \includegraphics[height=1.2cm,align=c]{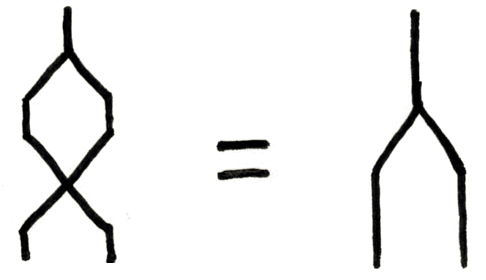}

  \includegraphics[height=1.2cm,align=c]{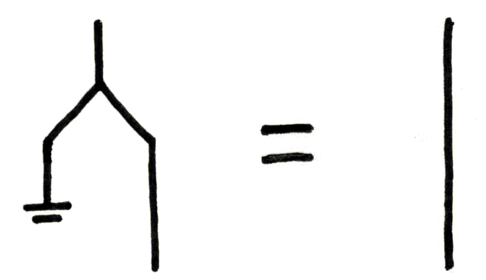}
\end{mathpar}
Must further be coherent with respect to the monoidal structure:
\begin{mathpar}
  \includegraphics[height=1.7cm,align=c]{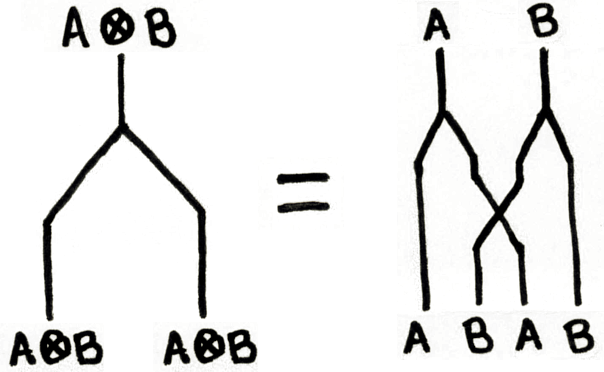}

  \includegraphics[height=1.7cm,align=c]{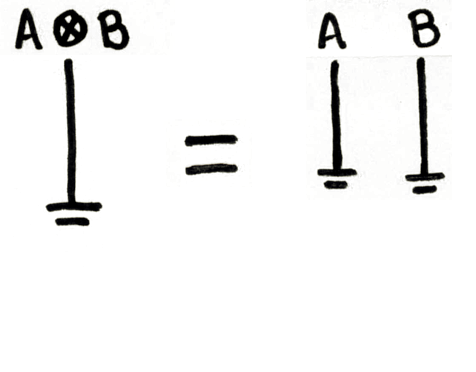}
\end{mathpar}
And every morphism $f$ of the category in question must be a comonoid homomorphism:
\begin{mathpar}
  \includegraphics[height=1.2cm,align=c]{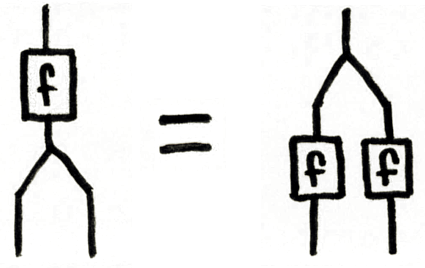}

  \includegraphics[height=1.2cm,align=c]{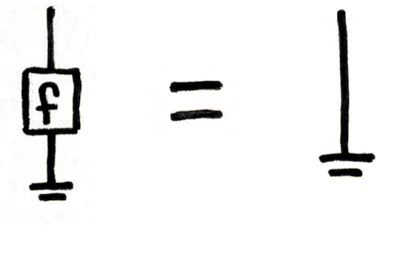}
\end{mathpar}

\begin{lemma}[\cite{Riley2018}]\label{lem:lensdecompose}
  Let $\A$ be a cartesian monoidal category, and let $h = \langle \alpha \mid \beta \rangle_M : (A,A) \to (B,B)$ be a homogeneous optic in $\A$. Then there exist arrows $\mathsf{get} : A \to B$ and $\mathsf{put} : A \otimes B \to A$ of $\A$ such that:
  \[
  \includegraphics[height=2cm,align=c]{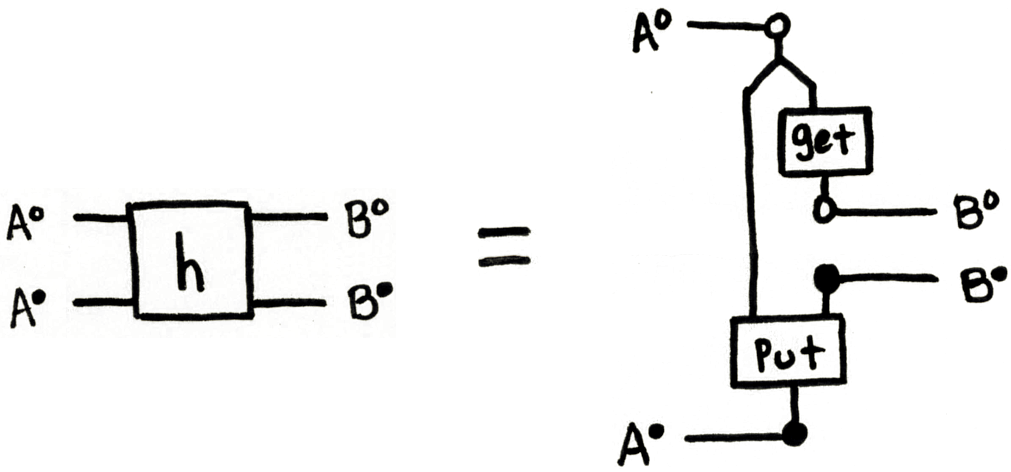}
  \]
\end{lemma}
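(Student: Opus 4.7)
The plan is to combine the universal property of products in the cartesian category $\A$ with the optic sliding equation, which by Theorem~\ref{thm:fullsubcat} is captured by the yanking axioms of $\corner{\A}$. Since $\A$ is cartesian, every object carries a natural comonoid structure and every morphism is a comonoid homomorphism; in particular, the morphism $\alpha : A \to M \otimes B$ is uniquely determined by its two projections onto $M$ and onto $B$.

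Accordingly, I would first introduce the two candidate components
\[
\mathsf{get} = \alpha(!_M \otimes 1_B) : A \to B, \qquad m = \alpha(1_M \otimes !_B) : A \to M,
\]
where $!_{(-)}$ denotes the cartesian counit. By the universal property of the product, $\alpha = \Delta_A(m \otimes \mathsf{get})$, and by middle-four interchange this factors as $\alpha = \Delta_A(1_A \otimes \mathsf{get})(m \otimes 1_B)$. I would then invoke the sliding equation with $f = m$ to transport $m$ across the residue:
\[
h = \langle \Delta_A(1_A \otimes \mathsf{get})(m \otimes 1_B) \mid \beta \rangle_M = \langle \Delta_A(1_A \otimes \mathsf{get}) \mid (m \otimes 1_B)\beta \rangle_A.
\]
Setting $\mathsf{put} = (m \otimes 1_B)\beta : A \otimes B \to A$ yields exactly the claimed decomposition. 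Graphically in $\bh\,\corner{\A}$ this amounts to copying the incoming $A$ wire at the top of the optic, extracting $m$ as the projection of $\alpha$ onto the residue wire, and then yanking $m$ across the left corner cell so that it is absorbed into the $\beta$ component below.

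I do not expect any serious obstacle. The factorisation of $\alpha$ through its projections is a purely $\A$-internal consequence of cartesianness, and the residue manipulation is exactly the sliding equation made graphical by the yanking axioms of $\corner{\A}$. The only small point to verify carefully is that the sliding step is legitimately applied with $f = m : A \to M$ and that the types match so that $\mathsf{put}$ indeed has the demanded profile $A \otimes B \to A$.
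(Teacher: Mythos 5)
Your proposal is correct and follows essentially the same route as the paper: the paper's (purely graphical) proof likewise uses the cartesian comonoid structure to copy the incoming $A$, splits $\alpha$ into its projections onto $B$ (giving $\mathsf{get}$) and onto $M$, and slides the latter across into $\beta$ (giving $\mathsf{put}$). The typing of the sliding step with $f = m : A \to M$ checks out, so there is nothing to add.
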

\begin{proof}
  We have:
  \[
  \includegraphics[height=2.5cm,align=c]{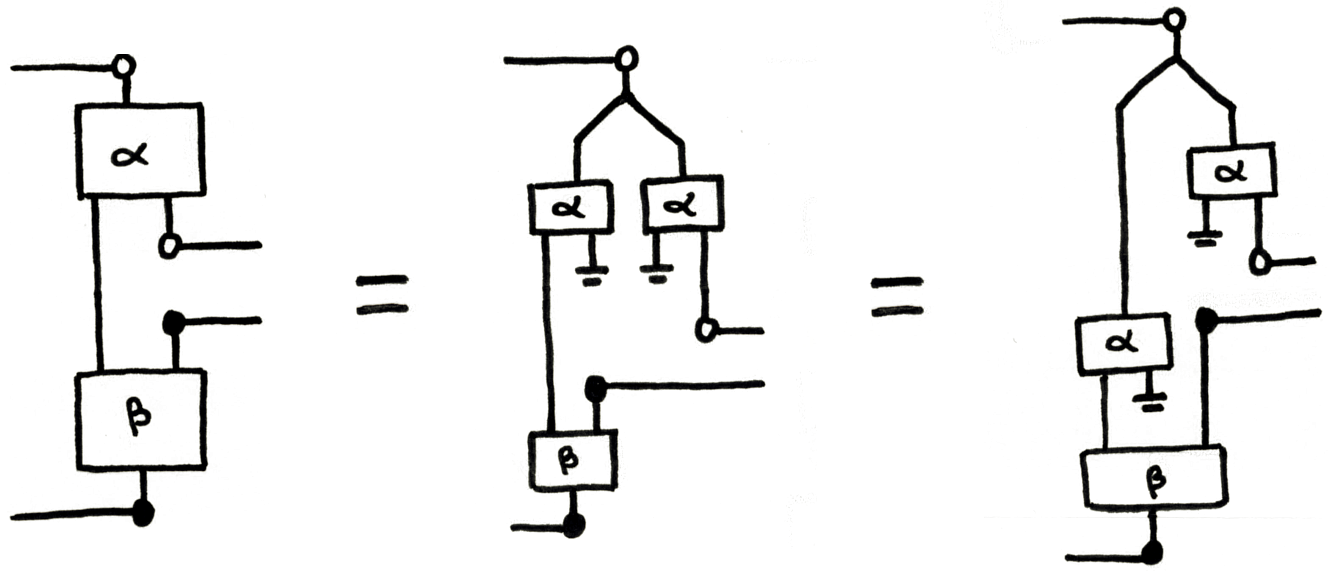}
  \]
  and so the claim follows via:
  \begin{mathpar}
    \includegraphics[height=1.7cm,align=c]{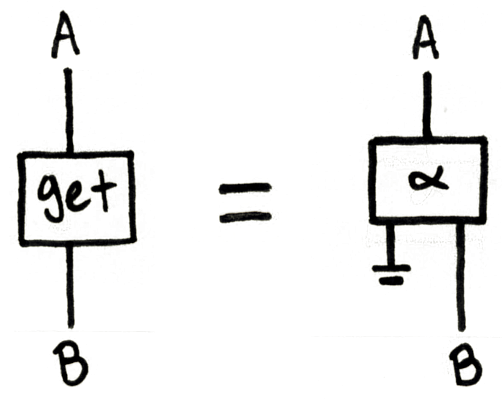}
    
    \includegraphics[height=1.7cm,align=c]{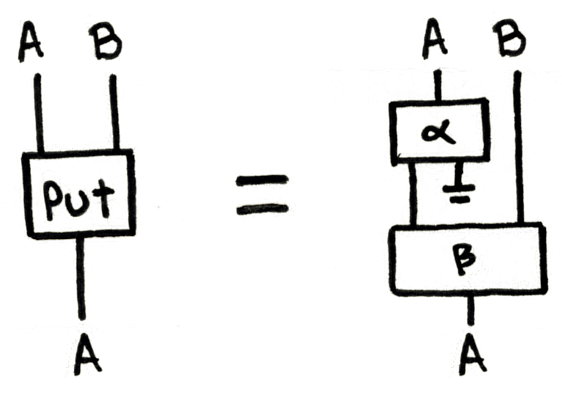}
  \end{mathpar}
\end{proof}

Homogeneous optics in cartesian monoidal categories are called \emph{lenses}. We write $[\mathsf{put} \mid \mathsf{get}] : (A,A) \to (B,B)$ for the lens specified by appropriate $\mathsf{put}$ and $\mathsf{get}$ arrows in the above manner. 

\begin{definition}[\cite{Foster2005}]
  A lens $[\mathsf{put} \mid \mathsf{get}] : (A,A) \to (B,B)$ is is said satisfy the \emph{lens laws} in case:
  \begin{mathpar}
    \includegraphics[height=2cm,align=c]{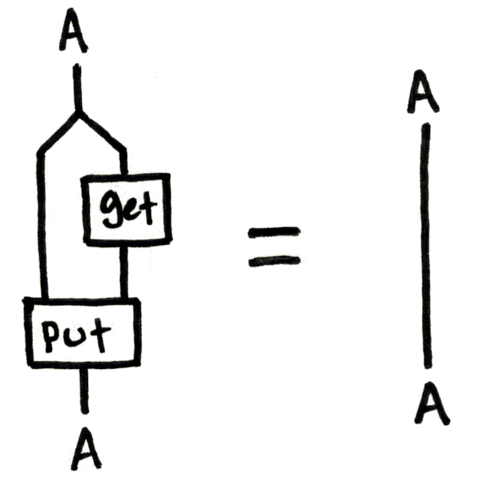}
    
    \includegraphics[height=1.7cm,align=c]{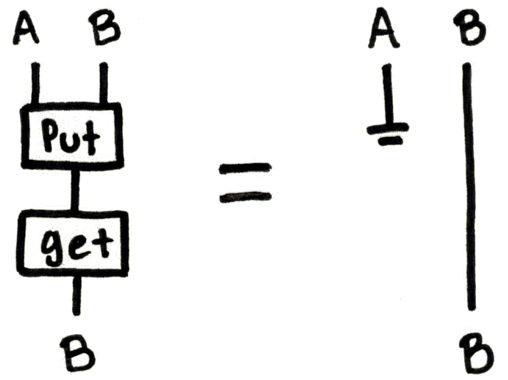}
    
    \includegraphics[height=1.7cm,align=c]{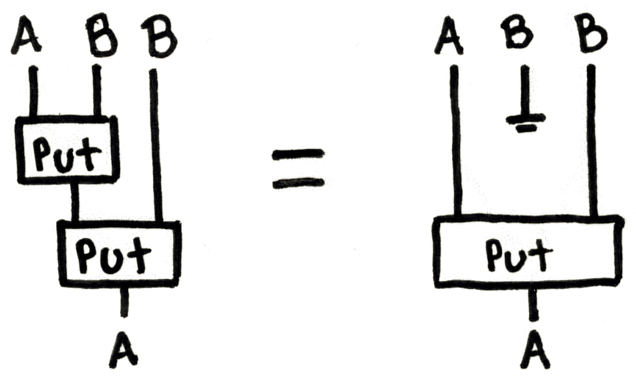}
  \end{mathpar}
\end{definition}  

\begin{lemma}[\cite{Riley2018}]\label{lem:lawsimplylawful}
  If a lens $h = [ \mathsf{put} \mid \mathsf{get} ] : (A,A) \to (B,B)$ satisfies the lens laws then it is lawful. 
\end{lemma}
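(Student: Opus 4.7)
The plan is to work in the graphical calculus of $\bh\,\corner{\A}$, using the decomposition from Lemma~\ref{lem:lensdecompose} to write $h$ in the form $[\mathsf{put} \mid \mathsf{get}]$, and then verify the two equations defining lawfulness (the ``pants'' comultiplication equation and the ``leg'' counit equation) by diagrammatic manipulation, drawing on the three lens laws together with the cartesian comonoid axioms and coherence equations recalled in the previous paragraphs.

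I would begin with the counit (``leg'') equation, which is the easier of the two. When $h = [\mathsf{put} \mid \mathsf{get}]$ is composed with the counit at $(B,B)$, the corner structure bends the $B$ output of $\mathsf{get}$ back into the $B$ input of $\mathsf{put}$. After yanking, the resulting vertical cell is $\mathsf{put} \circ \langle 1_A, \mathsf{get} \rangle$, which equals $1_A$ by the \textbf{GetPut} law. Combined with the cartesian ``total'' axiom (which ensures that the $A$-wire going into the copy of the residual can be absorbed by the counit), this shows the leg equation reduces to the counit on $(A,A)$.

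Next I would address the pants (comultiplication) equation. Writing out the left-hand side using the $[\mathsf{put} \mid \mathsf{get}]$ form, the result is a cell in which a single $\mathsf{put}$ receives the shared residual together with a \emph{single} $B$-value (the output of the one $\mathsf{get}$), but is then asked via the pants of $(B,B)$ to supply two copies of $B$ back to two independent $\mathsf{put}$s on the right boundary. The \textbf{PutPut} law lets one split this into two sequential $\mathsf{put}$s, after which the \textbf{PutGet} law rewrites the intermediate $B$ wires as $\mathsf{get}$ of fresh $A$s; the cartesian coherence equations (determinism of $\mathsf{get}$, coassociativity and cocommutativity of copy) then regroup the diagram into the $h \otimes h$ pattern following pants on $(A,A)$, which is exactly the right-hand side.

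The main obstacle will be the pants equation: one has to carefully track the single residual $A$-wire through the bends of the cornering and interleave applications of \textbf{PutGet}, \textbf{PutPut}, and the cartesian comonoid laws in the correct order so that the yanking equations line up the two sides. Once the two graphical calculations above are carried out, both lawfulness equations hold, so $h$ is a comonoid homomorphism and hence lawful.
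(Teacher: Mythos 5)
Your proposal is correct and follows essentially the same route as the paper: the paper's proof consists precisely of the two graphical computations you describe, using the GetPut law for the counit equation and the PutGet and PutPut laws (together with the cartesian comonoid axioms to regroup the copied residual) for the comultiplication equation. Two small imprecisions do not affect the argument: the counit case needs no appeal to the ``total'' axiom, since after yanking the composite is just the cap with $\mathsf{put}\circ\langle 1_A,\mathsf{get}\rangle$ spliced into the bent wire, and on the left-hand side of the comultiplication equation the single $\mathsf{put}$ receives the \emph{last} $B$ returned along the right boundary rather than the output of $\mathsf{get}$.
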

\begin{proof}
  For the counit we have:
  \[
  \includegraphics[height=1.7cm,align=c]{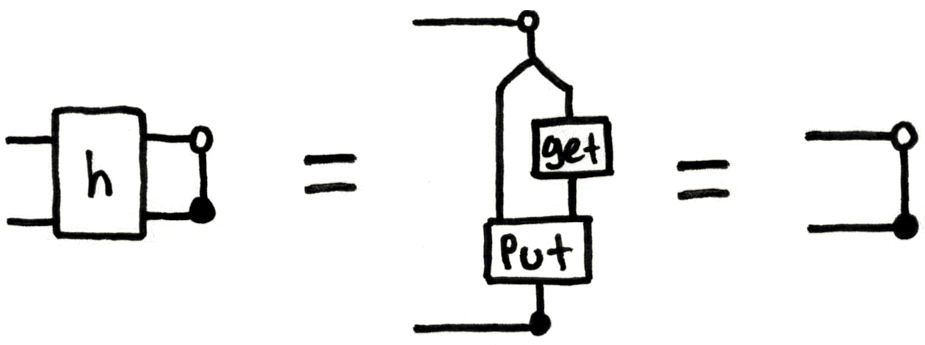}
  \]
  And for the comultiplication:
  \[
  \includegraphics[height=4cm,align=c]{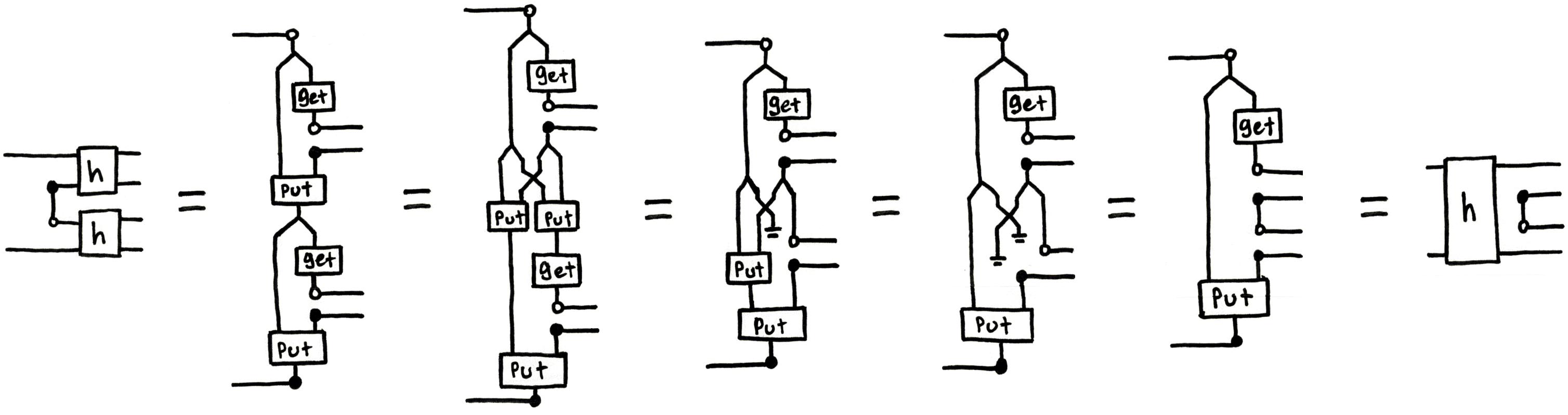}
  \]
\end{proof}

\begin{lemma}[\cite{Riley2018}]\label{lem:lawfulimplieslaws}
  If a lens $h = [\mathsf{put} \mid \mathsf{get} ] : (A,A) \to (B,B)$ is lawful and $B$ is inhabited in the sense that there is an arrow $k : 1 \to B$ in $\A$, then it satisfies the lens laws.
\end{lemma}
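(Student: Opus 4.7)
The plan is to derive each of the three lens laws (GetPut, PutGet, PutPut) from the two lawfulness equations by unfolding them in the graphical calculus of $\corner{\A}$. The inhabitant $k : 1 \to B$ is used to close off free $B^\bullet$ slots that arise when extracting morphism equations in $\A$ from cell equations in $\corner{\A}$.

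GetPut comes from the counit half of lawfulness. After unfolding $h$ as $[\mathsf{put} \mid \mathsf{get}]$ using Lemma~\ref{lem:lensdecompose} and applying the corner yanking equations, the preservation of the pants counit collapses to $\mathsf{put} \circ \langle 1_A, \mathsf{get} \rangle = 1_A$, which is GetPut. This step does not require $k$ and is essentially the reverse of the counit portion of Lemma~\ref{lem:lawsimplylawful}.

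PutGet and PutPut come from the comultiplication half. After unfolding $h$ on both sides using Lemma~\ref{lem:lensdecompose}, the equation becomes an equality between two cells in $\corner{\A}$, each built from the pants comultiplication (two straight wires together with a corner cup) and the $\mathsf{put}$ and $\mathsf{get}$ boxes. The right boundary of each side has four slots: two $B^\circ$ outputs and two $B^\bullet$ inputs. Composing both sides on the right with suitable closing cells -- using the cartesian counit on $B$ for the $B^\circ$ slots and the inhabitant $k$ for the $B^\bullet$ slots, together with a corner cup where appropriate -- reduces each side to a vertical cell in $\corner{\A}$. Via Lemma~\ref{lem:verticalcells}, the resulting equations are precisely PutGet and PutPut; geometrically, the inner $B^\circ$ slot witnesses PutGet and the left $A^\bullet$ boundary witnesses PutPut.

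The inhabitant $k$ is essential because the cartesian structure of $\A$ closes $B^\circ$ outputs via the counit but has no corresponding way to close $B^\bullet$ inputs; the morphism $k : 1 \to B$ furnishes exactly this missing input-side closing. The main obstacle will be the careful graphical bookkeeping required to isolate PutGet and PutPut cleanly, and verifying that the described closings do in fact produce precisely these laws rather than weaker pointwise variants.
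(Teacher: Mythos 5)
Your proposal matches the paper's proof in structure: the paper likewise derives GetPut graphically from counit preservation, and PutGet and PutPut from comultiplication preservation, closing off the remaining $B^\circ$/$B^\bullet$ boundary slots with the cartesian counit and the inhabitant $k$ so that Lemma~\ref{lem:verticalcells} yields equations in $\A$. The only slight imprecision is that $k$ is really only forced for PutGet (where a $B^\bullet$ input must be filled while its downstream output is discarded); for PutPut both $B^\bullet$ slots serve as the free variables of the law, but this does not affect the correctness of the argument.
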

\begin{proof}
  The first lens law holds as in:
  \[
  \includegraphics[height=2.5cm,align=c]{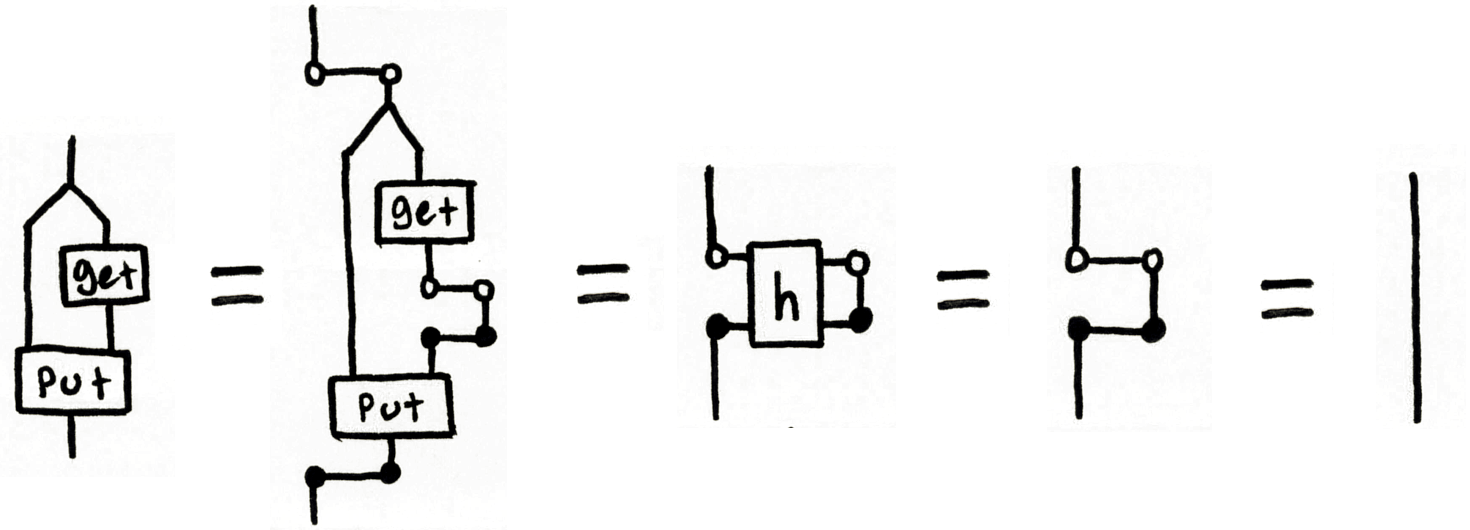}
  \]
  The second lens law holds as in:
  \[
  \includegraphics[height=4cm,align=c]{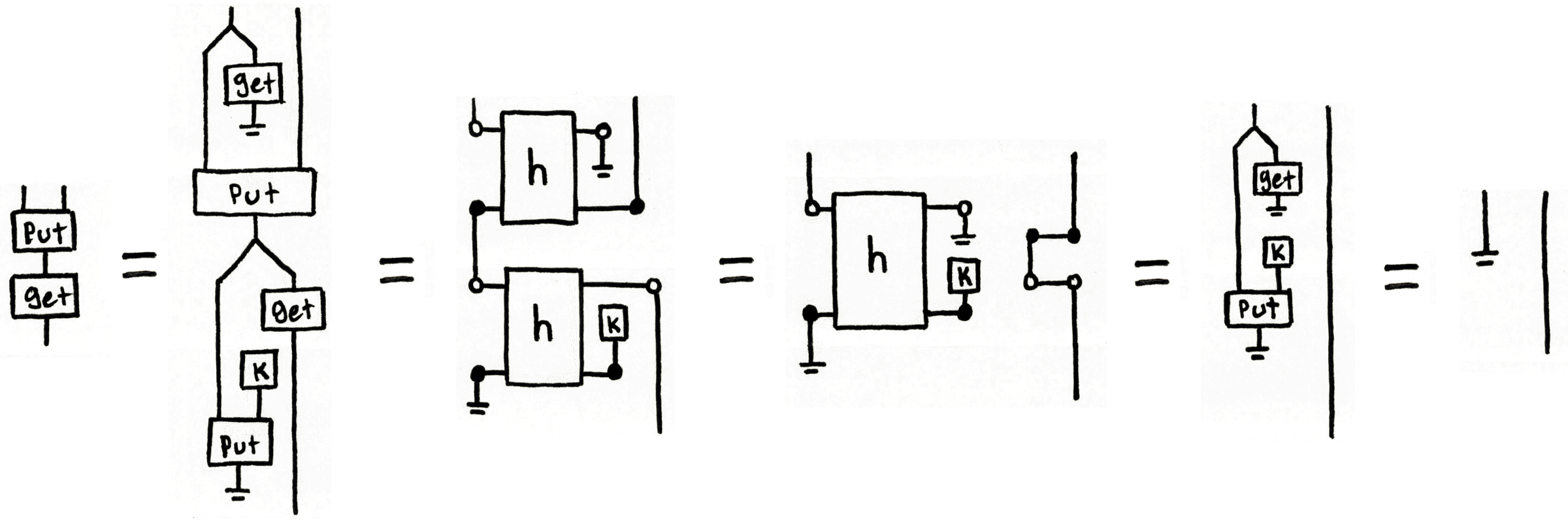}
  \]
  and the third lens law holds as in:
  \[
  \includegraphics[height=4cm,align=c]{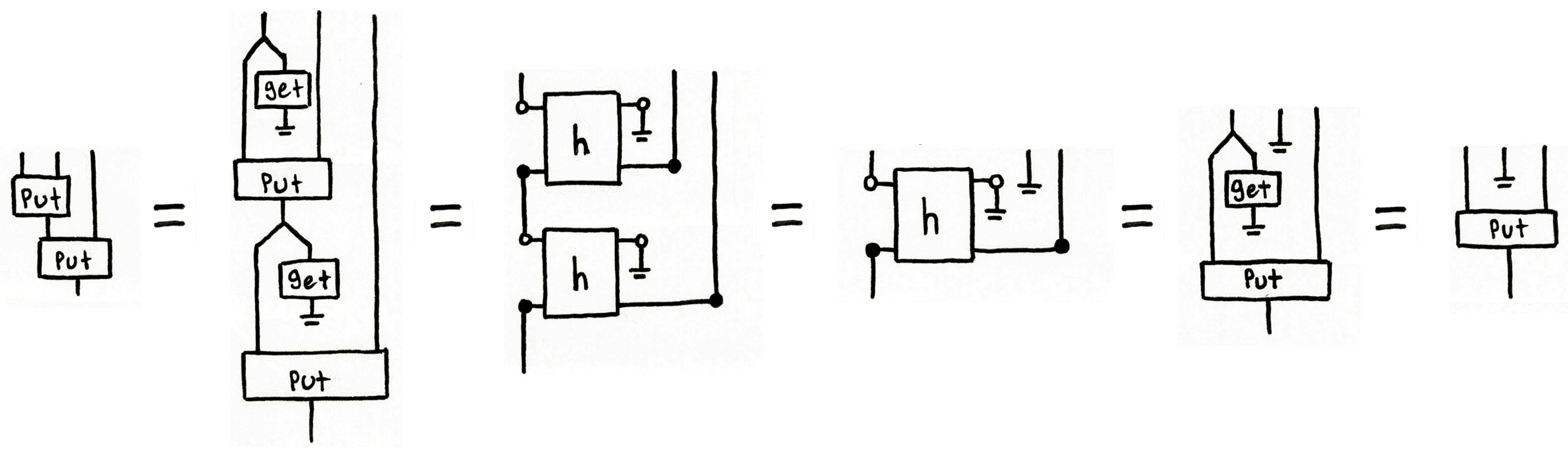}
  \]
\end{proof}

\begin{observation}[Teleological Categories]\label{obs:tele}
  $\bh\,\corner{\A}$ contains structure reminiscent of \emph{teleological categories}~\cite{Hedges2017}, which were introduced to allow well-founded diagrammatic reasoning about lenses. Analogous to the \emph{dualizable} morphisms of a teleological category are those of the form $f^\circ$, defined as below left, with duals $f^\bullet$, defined as below right:
  \begin{mathpar}
    \includegraphics[height=1.2cm,align=c]{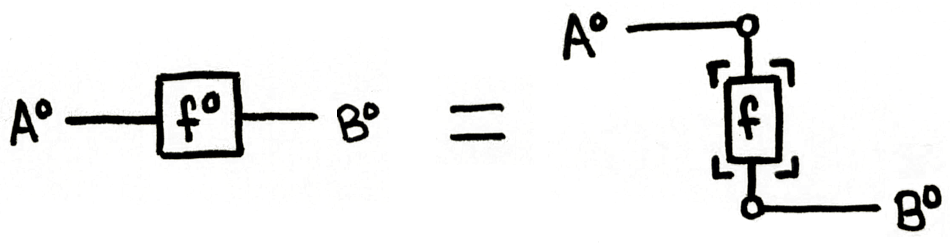}

    \includegraphics[height=1.2cm,align=c]{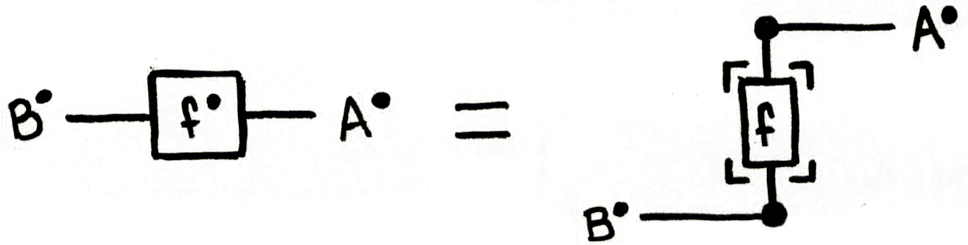}
  \end{mathpar}
  Standing in for the \emph{counits} of a teleological category we have the following cell for each $A \in \A$:
  \[
    \includegraphics[height=1.2cm,align=c]{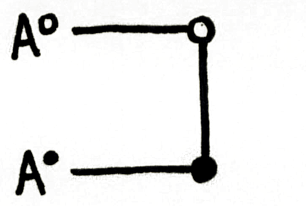}
  \]
  We then obtain an analogue of the condition that the counits be \emph{extranatural} as in:
  \[
   \includegraphics[height=1.2cm,align=c]{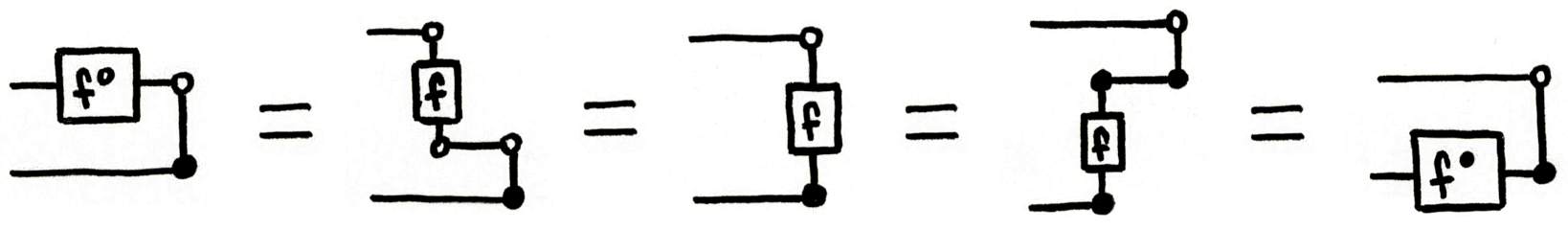}
  \]
  Notice that all arrows $A^\circ \to B^\circ$ of $\bh\,\corner{\A}$ are of the form $f^\circ$ for some $f : A \to B$ in $\A$ and that dually all arrows $B^\bullet \to A^\bullet$ are of the form $f^\bullet$, further characterising our analogue of the dualizable morphisms. 

  In light of this, we suggest that teleological categories are a shadow of the fact that $A^\circ$ is formally left adjoint to $A^\bullet$ in $\bh\,\corner{\A}$. We also point out that teleological categories do not contain enough of the relevant structure to prove Lemmas~\ref{lem:lawsimplylawful} and~\ref{lem:lawfulimplieslaws}, which require the unit of the formal adjunction between $A^\circ$ and $A^\bullet$ as well as the counit. 
\end{observation}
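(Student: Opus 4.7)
The plan is to verify each of the three substantive claims in the observation by combining the yanking equations built into the definition of $\corner{\A}$ with Lemma~\ref{lem:coendcorrespondence}. These claims are: the extranaturality equation for the counit; the characterization of every arrow $A^\circ \to B^\circ$ (respectively $B^\bullet \to A^\bullet$) as $f^\circ$ (respectively $f^\bullet$); and the formal adjunction $A^\circ \dashv A^\bullet$ in $\bh\,\corner{\A}$. No genuinely new ideas are needed beyond an unfolding of the definitions and pictures already set up in the paper.

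First, for the extranaturality equation, I would expand $f^\circ$ and $f^\bullet$ according to their displayed definitions and then attach the counit cell on the appropriate side of each. On both sides of the target equation a single corner strand can be straightened by one application of the yanking axiom, after which both diagrams become the same cell built from $\corner{f}$ together with a single counit. Equality in $\corner{\A}$ then follows from Myers' soundness result, exactly as it did in the proof of Lemma~\ref{lem:coendcorrespondence}.

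Second, for the characterization of dualizable morphisms, the corner cells induce a bijection between $\bh\,\corner{\A}(A^\circ, B^\circ) = \corner{\A}\cells{A^\circ}{I}{I}{B^\circ}$ and $\corner{\A}\cells{I}{I}{I}{A^\bullet \otimes B^\circ}$ by bending the left strand up and over the top boundary, with inverse obtained by bending it back down; the yanking equations guarantee that these two operations are mutually inverse. The latter cell-set is right-$\bullet\circ$-alternating of alternation depth $1$, so Lemma~\ref{lem:coendcorrespondence} reduces it to $\A(A,B)$, since there is no intermediate $M_i$ over which to coend. Tracing through the bijection shows that $f : A \to B$ is sent to $f^\circ$, so every arrow $A^\circ \to B^\circ$ in $\bh\,\corner{\A}$ is of the form $f^\circ$ for a unique such $f$. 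The dual statement for $f^\bullet$ follows by the same argument using Remark~\ref{rmk:alternationdual}.

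Finally, for the adjunction $A^\circ \dashv A^\bullet$, the unit and counit of the formal adjunction in the horizontal bicategory $\bh\,\corner{\A}$ are given by two of the four corner cells in the obvious way, and the two triangle identities amount to precisely the two yanking equations built into the definition of $\corner{\A}$. The main obstacle throughout is purely bookkeeping: keeping straight which corner cell plays which role, and ensuring that the conventions for $f^\circ$, $f^\bullet$, the counit, and the zig-zag identities all align consistently with the polarities already fixed in Section~\ref{sec:singledouble}. Once this is done, every verification reduces to a single yanking step.
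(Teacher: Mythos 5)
The paper states this as an \emph{observation} and supplies no proof of its own, so there is nothing to compare line-by-line; what you have written is a reasonable verification, and it uses exactly the toolkit the paper has already set up. Your second point is the substantive one and is handled correctly: bending the left boundary of a cell in $\bh\,\corner{\A}(A^\circ,B^\circ)$ over to the right gives a bijection with the depth-$1$ right-$\bullet\circ$-alternating cell-set $\corner{\A}\cells{I}{I}{I}{A^\bullet\otimes B^\circ}$, which Lemma~\ref{lem:coendcorrespondence} identifies with $\A(A,B)$ (the coend being over an empty list of $M_i$), and the dual case goes through Remark~\ref{rmk:alternationdual}; this is precisely the depth-$1$ instance of the argument in Theorem~\ref{thm:fullsubcat}. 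The extranaturality equation and the triangle identities for $A^\circ\dashv A^\bullet$ are, as you say, direct consequences of the yanking equations.

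Two small corrections. First, you do not need (and should not invoke) Myers' soundness result to establish the extranaturality equation: soundness is what guarantees that cells are equal \emph{only if} their diagrams are deformable into one another, and the paper uses it to bound the equations (as in Lemma~\ref{lem:coendcorrespondence}); to prove that two cells \emph{are} equal you simply derive the equation from the generating axioms, here one yanking step on each side plus naturality of $\corner{f}$. Second, the triangle identities are not literally ``the two yanking equations'': the yanking equations are stated for the four corner cells, and the triangle identities for the cup and cap in $\bh\,\corner{\A}$ are obtained from them by whiskering with identities and one use of interchange. Neither point affects the correctness of your argument.
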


\section{Comb Diagrams}\label{sec:combs}

In this section we discuss comb diagrams in the free cornering. The basic idea is that we would like to have \emph{higher-order} diagrams for our monoidal categories, pictured below on the left. Supplying the appropriate first-order string diagrams to a higher-order diagram results in a first-order diagram, pictured below on the right:
\begin{mathpar}
  \includegraphics[height=2cm,align=c]{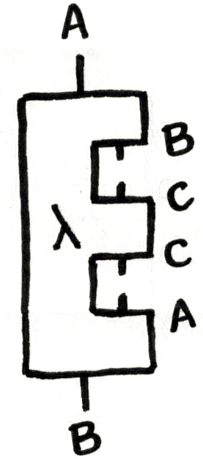}

  \includegraphics[height=2.5cm,align=c]{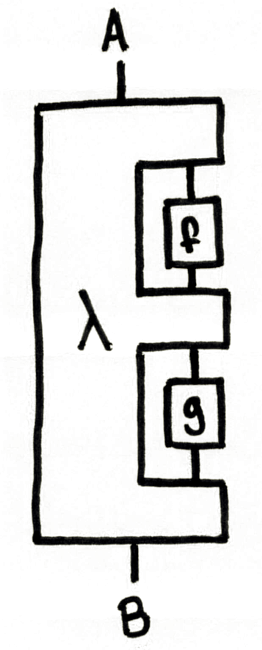}
\end{mathpar}
These higher-order diagrams have been called \emph{(right) comb diagrams} due to their appearance.

In the free cornering of a monoidal category $\A$, elements of right-$\bullet\circ$-alternating cell-sets are a good notion of right comb diagram, with the alternation depth corresponding to the number of gaps between the teeth:
\begin{mathpar}
\includegraphics[height=2cm,align=c]{figs/lambda-comb-form.png}
\hspace{0.3cm}
\leftrightsquigarrow
\hspace{0.3cm}
\includegraphics[height=1.7cm,align=c]{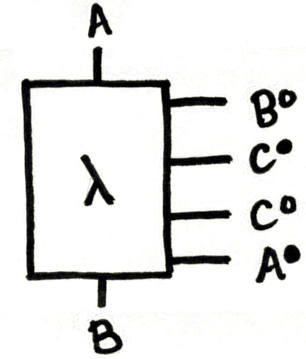}

\includegraphics[height=2.5cm,align=c]{figs/little-box-teeth-comb.png}
\hspace{0.3cm}
\leftrightsquigarrow
\hspace{0.3cm}
\includegraphics[height=2.5cm,align=c]{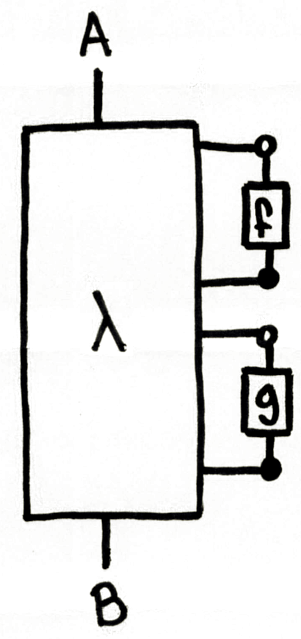}
\end{mathpar}
Lemma~\ref{lem:coendcorrespondence} tells us that this notion of comb diagram coincides with the notion of comb diagram developed by Rom\'{a}n in the more general framework of open diagrams~\cite{Roman2021}. The free cornering admits common comb diagram operations beyond inserting morphisms into the gaps. First, we may insert a comb diagram into one of the gaps to form another comb diagram:
\begin{mathpar}
\includegraphics[height=2cm,align=c]{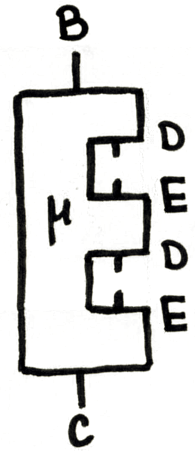}
\hspace{0.3cm}
\leftrightsquigarrow
\hspace{0.3cm}
\includegraphics[height=1.7cm,align=c]{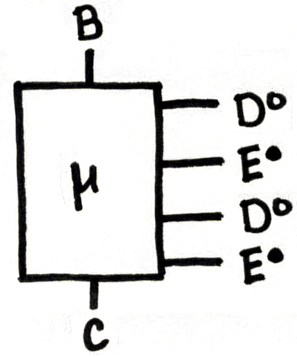}

\includegraphics[height=2.5cm,align=c]{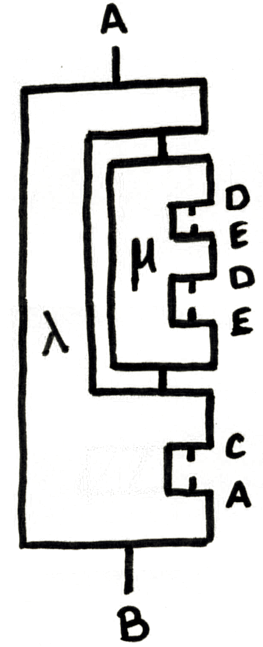}
\hspace{0.3cm}
\leftrightsquigarrow
\hspace{0.3cm}
\includegraphics[height=2.5cm,align=c]{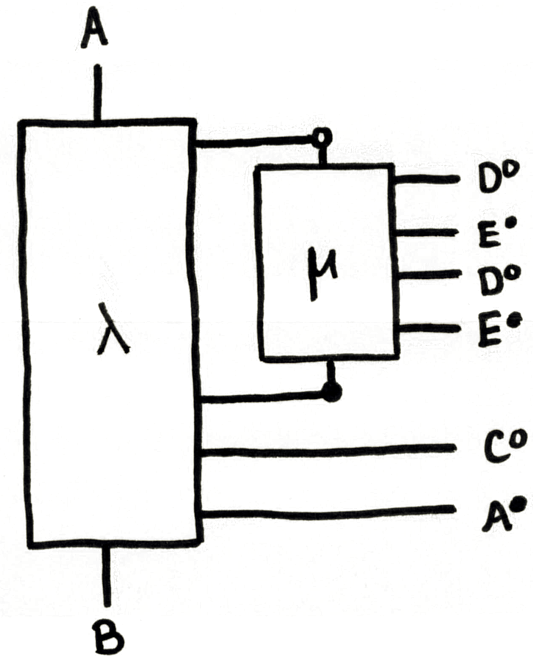}
\end{mathpar}
Next, following Remarks~\ref{rmk:alternationdual} and~\ref{rmk:opoptics} there is an dual notion of \emph{left comb diagrams} in the free cornering corresponding to the left-$\circ\bullet$-alternating cell-sets. In certain cases it makes sense to compose a right comb diagram with a left comb diagram by interleaving their teeth. The free cornering supports this as well:
\begin{mathpar}
\includegraphics[height=1.7cm,align=c]{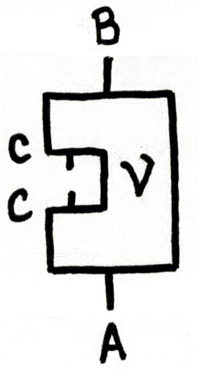}
\hspace{0.3cm}
\leftrightsquigarrow
\hspace{0.3cm}
\includegraphics[height=1.7cm,align=c]{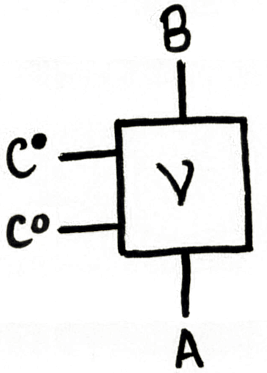}

\includegraphics[height=2.2cm,align=c]{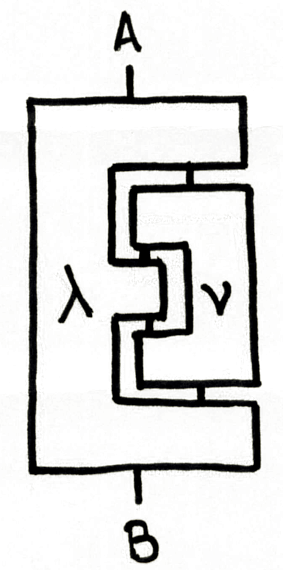}
\hspace{0.3cm}
\leftrightsquigarrow
\hspace{0.3cm}
\includegraphics[height=2.2cm,align=c]{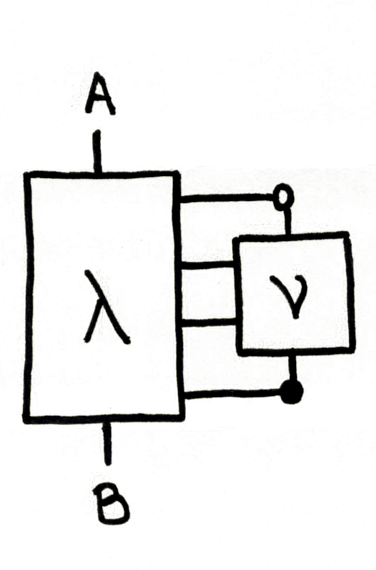}
\end{mathpar}
Thus, the free cornering is a natural setting in which to work with comb diagrams in a monoidal category.

\newpage

\bibliographystyle{eptcs}
\bibliography{citations}

\end{document}